\begin{document}

\newcommand{\RR}{\mathbb{R}^{2}}
\newcommand{\hh}[1]{\mathcal{H}_{\delta}^{#1}}
\newcommand{\HH}[1]{\mathcal{H}^{#1}}
\newcommand{\h}{\mathfrak{h}}
\newcommand{\g}{\mathfrak{g}}
\newcommand{\e}{\varepsilon}
\newcommand{\N}{\mathbb{N}}
\newcommand{\R}{\mathbb{R}}
\newcommand{\RN}{\mathbb{R}^{n}}
\newcommand{\s}{\mathbb{S}}
\newcommand{\sub}{\subseteq}
\newcommand{\diam}{\text{diam}}
\newcommand{\D}{\displaystyle}
\newcommand{\Z}{\mathbb{Z}}
\newcommand{\Q}{\mathbb{Q}}
\newcommand{\gap}{\underline{\Delta}}
\newcommand{\GAP}{\overline{\Delta}}
\newcommand{\E}[1]{\mathcal{E}(#1)}
\newcommand{\DN}[1]{\textbf{DN}(#1)}
\renewcommand{\a}{\mathfrak{a}}
\renewcommand{\b}{\mathfrak{b}}
\newcommand{\f}{\mathfrak{f}}
\newcommand{\id}{\textbf{id}}
\renewcommand{\i}{\textbf{id}}
\renewcommand{\H}{\mathbb{H}}
\newcommand{\J}{\mathbb{J}}
\newcommand{\NN}{\mathcal{N}_\delta}
\newcommand{\lbd}{\underline{\dim}_B}
\newcommand{\ubd}{\overline{\dim}_B}
\newcommand{\bd}{\dim_B}

\date{}

\newtheorem{theorem}{Theorem}[section]
\newtheorem{lemma}[theorem]{Lemma}
\newtheorem{corollary}[theorem]{Corollary}
\newtheorem{proposition}[theorem]{Proposition}
\newtheorem{claim}[theorem]{Claim}

\newtheorem{conjecture}[theorem]{Conjecture}

\theoremstyle{definition}
\newtheorem{definition}[theorem]{Definition}
\newtheorem{example}[theorem]{Example}
\newtheorem{remark}[theorem]{Remark}

\title{Furstenberg sets for a fractal set of directions}

\author{Ursula Molter and Ezequiel Rela}

\address{Departamento de
Matem\'atica \\ Facultad de Ciencias Exactas y Naturales\\ Universidad
de Buenos Aires\\ Ciudad Universitaria, Pabell\'on I\\ 1428 Capital
Federal\\ ARGENTINA\\ and CONICET, Argentina}
 \email[Ursula Molter]{umolter@dm.uba.ar}
\email[Ezequiel Rela]{erela@dm.uba.ar}
\keywords{Furstenberg sets, Hausdorff dimension, dimension function, Kakeya sets}
\subjclass{Primary 28A78, 28A80}

\thanks{This research  is partially supported by
Grants: PICT2006-00177, PIP 11220080100398
 and UBACyT X149}

\maketitle

\begin{abstract}
In this note we study the behavior of the size of Furstenberg sets with respect to the size of the set of directions defining it. For any pair $\alpha,\beta\in(0,1]$, we will say that a set $E\subset \R^2$ is an $F_{\alpha\beta}$-set if there is a subset $L$ of the unit circle of Hausdorff dimension at least $\beta$ and, for each direction $e$ in $L$, there is a line segment $\ell_e$ in the direction of $e$ such that the Hausdorff dimension of the set $E\cap\ell_e$ is equal or greater than $\alpha$. The problem is considered in the wider scenario of generalized Hausdorff measures, giving estimates on the appropriate dimension functions for each class of Furstenberg sets. As a corollary of our main results, we obtain that $\dim(E)\ge\max\left\{\alpha+\frac{\beta}{2} ; 2\alpha+\beta -1\right\}$ for any $E\in F_{\alpha\beta}$. In particular we are able to extend previously known results to the ``endpoint'' $\alpha=0$ case.
\end{abstract}

\section{Introduction}

In this article we are interested in the study of dimension properties of Furstenberg sets associated to fractal sets of directions. Let us introduce the definition of our object of study. In the sequel, we will denote with $\dim(E)$ the Hausdorff dimension of the set $E$.
\begin{definition}\label{def:alphabeta}
For  $\alpha,\beta$ in $(0,1]$, a subset $E$ of $\RR$ will be called an $F_{\alpha\beta}$-set if there is a subset $L$ of the unit circle such that $\dim(L)\ge\beta$ and, for each direction $e$ in $L$, there is a line segment $\ell_e$ in the direction of $e$ such that the Hausdorff dimension of the set $E\cap\ell_e$ is equal or greater than $\alpha$. 
\end{definition}
This generalizes the classical definition of Furstenberg sets, when the whole circle is considered as set of directions. For $L=\s$, which is a particular case of $\beta=1$, we recover the classical class $F_\alpha$ of $\alpha$-Furstenberg sets, and the best known result is 
\begin{equation}\label{eq:dim}
\max\left\{\alpha+\frac{1}{2} ; 2\alpha\right\}\le \gamma(\alpha)\le\frac{1}{2}+\frac{3}{2}\alpha,\qquad \alpha\in(0,1].
\end{equation} 
where $  \gamma(\alpha)=\inf \{\dim(E): E\in F_\alpha\}$. In \cite{mr10a} and \cite{mr10b} the above inequalities are proved in the general setting of dimension functions, allowing the extension to the endpoint $\alpha=0$ for some class of generalized Furstenberg sets.

Unavoidable references on this matter are \cite{wol99a}, \cite{wol03}, \cite{KT01} and \cite{taofinite}. 

The purpose of this note is to study how the parameter $\beta$ affects the bounds above. Moreover, by using general Hausdorff measures, we will extend the inequalities \prettyref{eq:dim} to the zero dimensional case.

From our results we will derive the following proposition.
\begin{proposition}\label{pro:fab}
For any set $E\in F_{\alpha\beta}$, we have that 
\begin{equation}\label{eq:dim_ab}
\dim(E)\ge \max\left\{\alpha+\frac{\beta}{2} ; 2\alpha+\beta -1\right\},\qquad \alpha,\beta>0.
\end{equation} 
\end{proposition}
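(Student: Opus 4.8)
The plan is to reduce to a single-scale, $\delta$-discretized statement and then run two complementary counting arguments, one producing each of the two lower bounds. Fix $E\in F_{\alpha\beta}$, with direction set $L$ (so $\dim(L)\ge\beta$) and segments $\ell_e$. For every $\beta'<\beta$ there is a Frostman measure on $L$ satisfying $\mu(B(x,r))\lesssim r^{\beta'}$, and from it one extracts, at scale $\delta$, a family of $m\approx\delta^{-\beta'}$ directions $e_1,\dots,e_m$ that are $\delta^{\beta'}$-separated, hence well spread across the arc. Since $\dim(E\cap\ell_{e_i})\ge\alpha$, for every $\alpha'<\alpha$ each line $\ell_i:=\ell_{e_i}$ must meet at least $\gtrsim\delta^{-\alpha'}$ members of any cover $\mathcal Q$ of $E$ by sets of diameter $\delta$. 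Writing $N=|\mathcal Q|$ and $t_Q=\#\{i:\ell_i\cap Q\neq\emptyset\}$, the whole problem is to bound $N$ from below, after which one lets $\alpha'\to\alpha$ and $\beta'\to\beta$. The delicate point --- and where the dimension-function machinery behind the main results does the real work --- is that a cover realizing the Hausdorff dimension of $E$ uses balls of varying radii, so before counting one must pigeonhole such a cover onto a single dyadic scale carrying a definite proportion of all the incidences; a box-counting bound at one scale does not on its own bound $\dim(E)$.

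For the estimate $\dim(E)\ge 2\alpha+\beta-1$ I would run an $L^2$ incidence count. Since each line meets $\gtrsim\delta^{-\alpha'}$ balls, $\sum_Q t_Q\gtrsim m\,\delta^{-\alpha'}$, and Cauchy--Schwarz gives
\[
\Big(\sum_Q t_Q\Big)^2\le N\sum_Q t_Q^2=N\sum_{i,j}\#\{Q:\ell_i\cap Q\neq\emptyset,\ \ell_j\cap Q\neq\emptyset\}.
\]
The key geometric input is that two lines meeting at angle $\theta$ have $\delta$-tubes sharing only $\lesssim\theta^{-1}$ of the balls; with the directions $\delta^{\beta'}$-separated one has $\theta_{ij}\gtrsim|i-j|\delta^{\beta'}$, so the off-diagonal contribution is dominated by the diagonal $\sum_i\#\{Q:\ell_i\cap Q\neq\emptyset\}\le m\,\delta^{-1}$. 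Hence $\sum_Q t_Q^2\lesssim m\,\delta^{-1}$, and plugging in yields $N\gtrsim m\,\delta^{1-2\alpha'}=\delta^{-(2\alpha'+\beta'-1)}$, which is the desired bound in the limit.

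For the estimate $\dim(E)\ge\alpha+\frac{\beta}{2}$ I would combine a pigeonhole bound with a bush bound and take their geometric mean. Let $T=\max_Q t_Q$. On one hand, $m\,\delta^{-\alpha'}\lesssim\sum_Q t_Q\le N\,T$, so $N\gtrsim m\,\delta^{-\alpha'}/T$. On the other hand, fix a ball $Q_0$ with $t_{Q_0}=T$: the $T$ lines through $Q_0$ have pairwise angles $\gtrsim\delta^{\beta'}$, so away from $Q_0$ their tubes separate quickly and meet essentially disjoint balls, whence the bush contributes $N\gtrsim T\,\delta^{-\alpha'}$. Multiplying the two inequalities gives $N^2\gtrsim m\,\delta^{-2\alpha'}$, i.e.\ $N\gtrsim m^{1/2}\delta^{-\alpha'}=\delta^{-(\alpha'+\beta'/2)}$.

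The main obstacle is the bush bound, where concentration must be controlled: a priori the $\gtrsim\delta^{-\alpha'}$ balls of $E\cap\ell_i$ could cluster in the small neighbourhood of $Q_0$ (of length $\sim\delta^{1-\beta'}$) on which the tubes still overlap, which would destroy the disjointness that the argument exploits. This is resolved by a two-ends reduction in the spirit of Wolff and of the authors' earlier work: one first passes to a sub-segment of each $\ell_i$ on which the $\alpha'$-dimensional content is not concentrated in any short sub-interval, losing only an arbitrarily small amount in $\alpha'$, and only then selects the bush. Carrying this reduction through at the level of dimension functions, together with the single-scale pigeonholing noted above, is what upgrades the two box-counting inequalities into the stated Hausdorff-dimension lower bounds.
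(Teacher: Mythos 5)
Your overall architecture is sound and in fact parallels the paper's: two separate single-scale counting arguments, one per bound, glued across scales by a pigeonhole. But there is a genuine gap at the first step, and both of your counts lean on it. From a Frostman measure with $\mu(B(x,r))\lesssim r^{\beta'}$ on $L$ you can extract $m\gtrsim \delta^{-\beta'}$ directions that are \emph{$\delta$-separated} (this is exactly Lemma~\ref{lem:entropy}); you cannot extract $m\approx\delta^{-\beta'}$ directions that are \emph{$\delta^{\beta'}$-separated}, ``well spread across the arc''. Indeed, $\delta^{-\beta'}$ points at mutual distance $\ge\delta^{\beta'}$ would have to be essentially equally spaced over the whole circle, and a fractal $L$ does not contain such a configuration: if $L$ is a self-similar Cantor set of dimension $\beta<1$, any $\delta^{\beta'}$-separated subset of $L$ has cardinality $\lesssim\delta^{-\beta\beta'}\ll\delta^{-\beta'}$. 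Consequently the angle bound $\theta_{ij}\gtrsim|i-j|\delta^{\beta'}$, which is the geometric engine of your Cauchy--Schwarz step (off-diagonal dominated by diagonal), and the claim that the bush lines through $Q_0$ have pairwise angles $\gtrsim\delta^{\beta'}$, are both unjustified; with the directions you can actually extract, consecutive angles can be as small as $\delta$, so as written both incidence estimates fail.

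Both arguments are repairable using only $\delta$-separation, but that repair is the real content and is missing. For the Cauchy--Schwarz step, summing over dyadic angular annuli around a fixed $e_i$ gives $\sum_{j\neq i}\theta_{ij}^{-1}\lesssim\delta^{-1}\log(1/\delta)$ for \emph{any} $\delta$-separated family, so $\sum_Q t_Q^2\lesssim m\,\delta^{-1}\log(1/\delta)$ and the bound $N\gtrsim\delta^{-(2\alpha'+\beta'-1)}/\log(1/\delta)$ survives (the logarithm is absorbed since $\alpha'<\alpha$, $\beta'<\beta$); note this step also silently assumes the cover has bounded multiplicity (otherwise two lines can share arbitrarily many balls), so you should first replace the cover by distinct dyadic squares scale by scale. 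For the bush, after your two-ends reduction the correct count is not angular disjointness of tubes but the observation that a ball at distance $\ge s$ from $Q_0$ pins down the direction of a line through both up to $O(\delta/s)$, hence is met by $O(1/s)$ of the $\delta$-separated bush lines; this restores $N\gtrsim T\delta^{-\alpha'}$. Finally, your ``single scale carrying a definite proportion'' pigeonhole cannot yield a uniform constant over countably many scales; one needs the weighted version (an $\ell^1$ sequence, as in Lemma~\ref{lem:part}), whose polynomial-in-$\log(1/\delta)$ loss is again absorbed by the strict inequalities. For comparison, the paper sidesteps these issues by proving stronger dimension-function theorems: the $2\alpha+\beta-1$ bound via Mitsis's $L^2$ Kakeya maximal inequality with respect to the Frostman measure (Proposition~\ref{pro:maximalgeneral} and Theorem~\ref{thm:hgtoh2g/x}), and the $\alpha+\tfrac{\beta}{2}$ bound via the splitting lemma and a count of directions joining pairs of separated balls (Lemmas~\ref{lem:split}, \ref{lem:conobolas} and Theorem~\ref{thm:hgtohsqrtg}), with Hausdorff content providing the $\sigma$-subadditivity used to recombine all scales rather than selecting one.
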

It is not hard to prove \prettyref{pro:fab} directly, but we will study this problem in a wider scenario and derive it as a corollary. We also remark that our results are consistent with the ones in \cite{mit02}, where the author proves, essentially, the second bound for the case $\alpha=1$, $\beta\in(0,1]$.

There is a natural way to generalize this problem by looking at dimension functions that are not necessarily power functions (\cite{Hau18}). Let us begin with the notion of dimension functions.

\subsection{Dimension Functions}
\begin{definition}
The following class of functions will be called \textit{dimension functions}. 
\begin{equation*}
 \mathbb{H}:=\{h:[0,\infty)\to[0:\infty), 
 \text{non-decreasing, right continuous, } 
h(0)=0 \}.
\end{equation*} 
\end{definition}
The important subclass of those $h\in\mathbb{H}$ that satisfy a doubling condition will be denoted by $\mathbb{H}_d$:
\begin{equation*}
	\mathbb{H}_d:=\left\{h\in\mathbb{H}: h(2x)\le C h(x) \text{ for some }C>0\right\}.
\end{equation*} 
\begin{remark}
Clearly, if $h\in\mathbb{H}_d$, the same inequality will hold (with some other constant) if 2 is replaced by any other $\lambda>1$. We also remark that any concave function trivially belongs to $\mathbb{H}_d$. Also note that the monotonicity of $h$ implies that $C\ge 1$.
\end{remark}

If one only looks at the power functions, there is a natural total order given by the exponents. If we denote with $h_\alpha(x)=x^\alpha$, then $h_\alpha$ is, in some sense, {\em smaller} than $ h_\beta$ if and only if $\alpha<\beta$. In $\mathbb{H}$ we also have a natural notion of order, but we can only obtain a \textit{partial} order.
\begin{definition}\label{def:order}
Let $g,h$ be two dimension functions. We will say that $g$ is dimensionally smaller than $h$ and write $g\prec h $ if and only if
	\begin{equation*}
	\lim_{x\to 0^+}\dfrac{h(x)}{g(x)}=0.
	\end{equation*}	
\end{definition}

We also remark that we will be particularly interested in the special subclass of dimension functions that allows us to classify zero dimensional sets, that means, that $h$ is in this class if it is smaller than any of the functions $x^\alpha$, $\alpha>0$.

\begin{definition}
A function $h\in\H$ will be called ``zero dimensional dimension function'' if $h\prec h_\alpha$ for any $\alpha>0$. 
We will denote by $\mathbb{H}_0$ the subclass of those functions.
\end{definition}

As usual, the $h$-dimensional (outer) Hausdorff  measure $\HH{h}$ will be defined as follows. For a set $E\subseteq\R^n$ and $\delta>0$, write
\begin{equation*}
	\hh{h}(E)=\inf\left\{\sum_i h(\diam(E_i)):E\subset\bigcup_i^\infty E_i,  \diam(E_i)<\delta \right\}.
\end{equation*}
The $h$-dimensional Hausdorff measure $\HH{h}$ of $E$ is defined by
\begin{equation*}
	\HH{h}(E)=\sup_{\delta>0 }\hh{h}(E).
\end{equation*} 
We remark that, even though they would not lead to the exact same measures, we will consider functions $g,h$ such that there exist constants $c,C$ with $0<c\le \frac{g(x)}{h(x)}\le C<\infty$ for all $x>0$ to be equivalent. In that case we write $g\equiv h$.

To measure the ``distance'' between to dimension functions, we introduce the following notion:
\begin{definition}
 Let $g,h\in\H$ with $g\prec h$. Define the ``gap'' between $g$ and $h$ as
\begin{equation}
\Delta(x)=\frac{h(x)}{g(x)}.
\end{equation} 
\end{definition}
From this definition and the definition of partial order, we always have that $\lim_{x\to 0}\Delta(x)=0$, and therefore the speed of convergence to zero can be seen as a notion of distance between $g$ and $h$. 

Now we present the problem. Let us begin with the definition of $F_{\h\g}$-sets. Let $\h$ and $\g$ be two dimension functions. A set $E\subseteq\RR$ is a Furstenberg set of type $\h\g$, or an $F_{\h\g}$-set, if there is a subset $L$ of the unit circle such that $\HH{\g}(L)>0$ and, for each direction $e$ in $L$,  there is a line segment $\ell_e$ in the direction of $e$ such that  $\HH{\h}(\ell_e \cap E)>0$. 

Note that this hypothesis is stronger than the one used to define the original Furstenberg-$\alpha$ sets. However, the hypothesis $\dim(E\cap\ell_e)\ge \alpha$ is equivalent to $\HH{\beta}(E\cap\ell_e)>0$ for any $\beta$ smaller than $\alpha$. If we use the wider class of dimension functions introduced above, the natural way to define $F_\h$-sets would be to replace the parameters $\beta<\alpha$ with two dimension functions satisfying the relation $h\prec \h$. But requiring  $E\cap\ell_e$ to have positive $\HH{h}$ measure for any $h\prec \h$ implies that it has also positive $\HH{\h}$ measure (Theorem 42, \cite{rog70}). Therefore, this definition is the natural generalization of the $F^+_{\alpha\beta}$ class defined below.

\begin{definition}\label{def:alphabeta+}
For each pair $\alpha,\beta$ in $(0,1]$, a subset $E$ of $\RR$ will be called an $F^+_{\alpha\beta}$-set if there is a subset $L$ of the unit circle such that $\HH{\beta}(L)>0$ and, for each direction $e$ in $L$, there is a line segment $\ell_e$ in the direction of $e$ such that $\HH{\alpha}(\ell_e\cap E)>0$.
\end{definition}

Now, for the sake of clarity in the proof of our results, we will perform the same reduction made in \cite{mr10a}. A standard pigeonhole argument allows us to work with the following definition.

\begin{definition}\label{def:Fhg}
Let $\h$ and $\g$ be two dimension functions. A set $E\subseteq\RR$ is a Furstenberg set of type $\h\g$, or an $F_{\h\g}$-set, if there is a subset $L$ of the unit circle such that  $\HH{\g}(L)>0$ and, for each direction $e$ in $L$, there is a line segment $\ell_e$ in the direction of $e$ such that  $\hh{\h}(\ell_e \cap E)>1$ for all $\delta<\delta_E$ for some $\delta_E>0$ with $\delta_E$ depending only on $E$.
\end{definition}

Following the intuition suggested by \prettyref{pro:fab}, one could conjecture that if $E$ belong to the class $F_{\h\g}$ then  an appropriate dimension function for $E$ should be dimensionally greater than $\frac{\h^2\g}{\i}$ and $\h\sqrt{\g}$ (where $\i$ is the identity function). This will indeed be the case, and we will provide some estimates on the gap between those conjectured dimension functions and a generic test function $h\in\H$ to ensure that $\HH{h}(E)>0$. In addition we illustrate with some examples.
We will consider the two results separately. Namely, for a given pair of dimension functions $\g\in\H$ and $\h\in\H_d$, in \prettyref{sec:h2g/id} we obtain sufficient conditions on a test dimension function $h\in\H$, $h\succ \frac{\h^2\g}{\i}$ to ensure that $\HH{h}(E)>0$ for any set $E\in F_{\h\g}$. In \prettyref{sec:hsqrtg} we consider the analogous problem for $h\succ \h\sqrt{\g}$. The next section summarizes some preliminary results to be used in our proofs and additional notation. Finally, in \prettyref{sec:size} we briefly discuss the appropriate notion of size for the set of directions defining the Furstenberg classes.

\section{Preliminaries}\label{sec:prelim}

In this section we include some preliminary and technical results needed in the sequel. We will use the notation $A\lesssim B$ to indicate that there is a constant $C>0$ such that $A\le C B$, where the constant is independent of $A$ and $B$. By $A\sim B$ we mean that both $A\lesssim B$ and $B\lesssim A$ hold. As usual, by a $\delta$-covering of a set $E$ we mean a covering of $E$ by sets $U_i$ with diameters not exceeding $\delta$.

In \prettyref{sec:h2g/id} the main tool will be an $L^2$ estimate for the Kakeya maximal function for general measures. For an integrable function on $\RN$, the Kakeya maximal function at  scale $\delta$ will be $\mathcal{K}_\delta(f):\s^{n-1}\to \R$,
\begin{equation}
	\mathcal{K}_\delta(f)(e)=\sup_{x\in\RN}\frac{1}{|T_e^\delta(x)|}\int_{T_e^\delta(x)}|f(x)|\ dx\qquad e\in\s^{n-1},\nonumber
\end{equation}
where $T_e^\delta(x)$ is a $1\times \delta$-tube (by this we mean a tube of length 1 and cross section of radius $\delta$) centered at $x$ in the direction $e$. 

The estimate we need is the main result of \cite{mit02}. There the author proves (Theorem 3.1) the following.
\begin{proposition}\label{pro:maximalgeneral}
Let $\mu$ be a Borel probability measure on $\s$ such that $\mu(B(x,r))\lesssim \varphi(r)$ for some non-negative function $\varphi$ for all $r\ll 1$. Define the Kakeya maximal operator $\mathcal{K}_\delta$ as usual:
\[
\mathcal{K}_\delta(f)(e)=\sup_{x\in\RN}\frac{1}{|T_e^\delta(x)|}\int_{T_e^\delta(x)}|f(x)|\ dx,\qquad e\in\s^{n-1}.
\]
Then we have the estimate
\begin{equation}\label{eq:maximalgeneral}
 \|\mathcal{K}_\delta\|^2_{L^2(\RR)\to L^2(\s,d\mu)}\lesssim C(\delta)=\int_\delta^1\frac{\varphi(u)}{u^2}du.
\end{equation}
\end{proposition}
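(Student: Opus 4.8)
The plan is to follow the classical duality/$L^2$ argument of C\'ordoba for the Kakeya maximal function, keeping careful track of how the growth condition on $\mu$ enters. Since the target of the operator is $L^2(\RR)$ we work in the plane, where a tube $T_e^\delta(x)$ has area $|T_e^\delta(x)|\sim\delta$. We may assume $f\ge 0$, and I would first linearize the operator: for each direction $e$ select (measurably, up to a harmless factor of $2$) a tube $T_e=T_e^\delta(x(e))$ that essentially realizes the supremum defining $\mathcal{K}_\delta f(e)$. It then suffices to bound $\int_\s \big(\tfrac{1}{|T_e|}\int_{T_e} f\big)^2 d\mu(e)$ by $C(\delta)\|f\|_2^2$, uniformly in the selection $e\mapsto x(e)$.

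By duality I would pair against a nonnegative $g\in L^2(\s,d\mu)$ with $\|g\|_{L^2(d\mu)}=1$ and estimate $\int_\s g(e)\,\tfrac{1}{|T_e|}\int_{T_e}f\,d\mu(e)$. Interchanging the order of integration and applying Cauchy--Schwarz in the spatial variable reduces the problem to controlling the bilinear form
\[
\int_\s\int_\s \frac{|T_e\cap T_{e'}|}{|T_e|\,|T_{e'}|}\,g(e)g(e')\,d\mu(e)\,d\mu(e').
\]
Here the geometry enters through the standard planar tube-intersection estimate: two $1\times\delta$ tubes whose directions subtend an angle $\theta=|e-e'|$ satisfy $|T_e\cap T_{e'}|\lesssim \delta^2/(|e-e'|+\delta)$. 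Combined with $|T_e|\sim|T_{e'}|\sim\delta$, the kernel collapses to $\tfrac{1}{|e-e'|+\delta}$, so the form is dominated by $\int\int \frac{g(e)g(e')}{|e-e'|+\delta}\,d\mu(e)\,d\mu(e')$.

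Since this kernel is symmetric and positive, Schur's test reduces everything to the single-variable bound $\sup_{e}\int_\s \frac{d\mu(e')}{|e-e'|+\delta}\lesssim C(\delta)$, which is where the hypothesis $\mu(B(e,r))\lesssim\varphi(r)$ is used and which I regard as the crux. I would write this integral as a Stieltjes integral in $r=|e-e'|$ against the distribution function $F(r)=\mu(B(e,r))$ and integrate by parts; the boundary contribution is $O(1)$ and absorbed into $C(\delta)$, while the main term becomes $\int_0^1 \frac{F(r)}{(r+\delta)^2}\,dr\lesssim\int_0^1\frac{\varphi(r)}{(r+\delta)^2}\,dr$. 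Splitting at $r=\delta$ and using that $r+\delta\sim\delta$ on $[0,\delta]$ and $r+\delta\sim r$ on $[\delta,1]$ yields exactly $\int_\delta^1 \frac{\varphi(u)}{u^2}\,du=C(\delta)$ (a dyadic decomposition of the annuli $|e-e'|\sim 2^{-j}$ gives the same bound). Assembling the pieces produces $\|\mathcal{K}_\delta f\|_{L^2(d\mu)}^2\lesssim C(\delta)\|f\|_2^2$, which is the claim.

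The main obstacle is the interaction in this final step between the planar tube geometry and the growth of $\mu$: one must check that the kernel $\tfrac{1}{|e-e'|+\delta}$, integrated against a measure obeying $\mu(B(e,r))\lesssim\varphi(r)$, reproduces \emph{precisely} the weight $\int_\delta^1 \varphi(u)u^{-2}\,du$ rather than something larger. The tube-intersection bound itself is classical, and the only genuinely new point beyond the Lebesgue case $\varphi(r)=r$ (which returns C\'ordoba's $\log(1/\delta)$) is this conversion of the Frostman-type hypothesis into $C(\delta)$.
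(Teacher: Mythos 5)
The paper itself contains no proof of this statement: Proposition \ref{pro:maximalgeneral} is quoted directly from Mitsis \cite{mit02} (Theorem 3.1 there), so the only meaningful comparison is with that source. Your argument is correct and is essentially the same C\'ordoba-type $L^2$ argument used there: linearization, duality, reduction to the bilinear form with kernel $|T_e\cap T_{e'}|/(|T_e||T_{e'}|)\lesssim (|e-e'|+\delta)^{-1}$, Schur's test, and then the conversion of the Frostman-type hypothesis into the bound $\sup_e\int (|e-e'|+\delta)^{-1}\,d\mu(e')\lesssim \int_\delta^1 \varphi(u)u^{-2}\,du$. Two points you gloss over are harmless but worth recording: the tube-intersection estimate must be phrased in terms of the angle between the \emph{lines}, i.e. with $\min(|e-e'|,|e+e'|)+\delta$ in the denominator (antipodal directions give essentially parallel tubes), which only doubles the Schur constant; and the final integration-by-parts computation uses that $\varphi$ may be taken non-decreasing (replace $\varphi(r)$ by $\sup_x \mu(B(x,r))$), which together with $\mu(\s)=1$ also yields $C(\delta)\gtrsim 1$, justifying your absorption of the $O(1)$ boundary terms.
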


\begin{remark}
 It should be noted that if we choose $\varphi(x)=x^s$, then we obtain as a corollary that 
\begin{equation}\label{eq:maximalbetapower}
 \|\mathcal{K}_\delta\|^2_{L^2(\RR)\to L^2(\s,d\mu)}\lesssim \delta^{s-1}.
\end{equation}
In the special case of $s=1$, the bound has the known logarithmic growth:
\[
 \|\mathcal{K}_\delta\|^2_{L^2(\RR)\to L^2(\s,d\mu)}\sim \log(\frac{1}{\delta}).
\]
\end{remark}

This result will be used in \prettyref{sec:h2g/id}, where the hypotheses imposed on a set $E$ for being an $F_{\h\g}$ set guarantee, via Frostman's lemma, that there exists a probability measure $\mu$ on the set of directions $L$ with $\mu(B_r)\lesssim \g(r)$ for any ball $B_r$ (see \cite{mat95}). Let us remark that \prettyref{eq:maximalbetapower} suggests that the constant $C(\delta)$ plays, in the general case, the role of $\frac{\g}{\i}(\delta)$.

In \prettyref{sec:hsqrtg} we perform a more combinatorial kind of proof. We introduce the notion of $\delta$-entropy of a set $E$ in the next definition

\begin{definition}
 Let $E\subset \RN$ and $\delta\in\R_{>0}$. The $\delta$-entropy  of $E$ is the maximal possible cardinality of a $\delta$-separated subset of $E$. We will denote this quantity with $\NN(E)$.
\end{definition}

The main idea is to relate the $\delta$-entropy to some notion of size of the set. Clearly, the entropy is essentially the Box dimension or the Packing dimension of a set (see \cite{mat95} or \cite{fal03} for the definitions) since both concepts are defined in terms of separated $\delta$ balls with centers in the set. However, for our proof we will need to relate the entropy of a set to some quantity that has the property of being (in some sense) stable under countable unions. One choice is therefore the notion of Hausdorff content, which enjoys the needed properties: it is an outer measure, is finite, and reflects the entropy of a set in the following manner. Recall that the $\g$-dimensional Hausdorff content of a set $E$ is defined as 
\begin{equation}\label{eq:Hauscontent}
\HH{\g}_\infty(E)=\inf\left\{\sum_i\g(\diam(U_i): E\subset \bigcup_i U_i\right\}.
\end{equation} 

Note that the $\g$-dimensional Hausdorff content $\HH{\g}_\infty$ is clearly not the same than the $\g$-dimensional Hausdorff measure $\HH{\g}$. In fact, they are the measures obtained by applying Method I and Method II (see \cite{mat95}) respectively to the premeasure that assigns to a set $A$ the value $\g(\diam(A))$.

For future reference, we state the following estimate for the $\delta$-entropy of a set with positive $\g$-dimensional Hausdorff content as a lemma.
\begin{lemma}\label{lem:entropy}
Let $\g\in\H$ and let $A$ be any set. Let  $\NN(A)$ be the $\delta$-entropy of $A$. Then $\NN(A)\ge\frac{\HH{\g}_\infty(A)}{\g(\delta)}$.
\end{lemma}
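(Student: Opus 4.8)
The plan is to use the elementary duality between $\delta$-separated sets and coverings: a \emph{maximal} $\delta$-separated subset of $A$ is automatically a $\delta$-net, and the balls it generates furnish an admissible covering that can be plugged directly into the definition \prettyref{eq:Hauscontent} of the content $\HH{\g}_\infty$.

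First I would dispose of the trivial case: if $\NN(A)=\infty$ the asserted inequality holds vacuously, so assume $N:=\NN(A)<\infty$ and fix a $\delta$-separated subset $\{x_1,\dots,x_N\}\subseteq A$ of maximal cardinality $N$ (such a set exists precisely because $N$ is finite). The crux of the argument is then the observation that maximality forces the corresponding balls to cover $A$. Indeed, if some $y\in A$ had distance at least $\delta$ from every $x_i$, then $\{x_1,\dots,x_N,y\}$ would be a $\delta$-separated subset of $A$ of cardinality $N+1$, contradicting the maximality of $N=\NN(A)$. Hence every point of $A$ lies within $\delta$ of some $x_i$, so $A\subseteq\bigcup_{i=1}^N B(x_i,\delta)$. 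This finite family is an admissible covering of $A$, and since $\g$ is non-decreasing, feeding it into the infimum defining the content gives
\[
\HH{\g}_\infty(A)\le \sum_{i=1}^N \g\big(\diam B(x_i,\delta)\big) = N\,\g(2\delta),
\]
which, after dividing, already yields $\NN(A)\ge \HH{\g}_\infty(A)/\g(2\delta)$.

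I expect the only genuinely delicate point to be a matter of normalization rather than of substance: a ball of radius $\delta$ has diameter $2\delta$, so the literal computation produces $\g(2\delta)$ in place of $\g(\delta)$. As $\g$ is non-decreasing this discrepancy is harmless and is absorbed either by the doubling available in our applications (so that $\g(2\delta)\lesssim\g(\delta)$ for $\g\in\H_d$) or simply by identifying the separation scale with the diameter rather than the radius of the covering balls. No other obstacle arises: the entire content of the lemma is the maximality argument showing that a maximal $\delta$-separated set is a $\delta$-net, combined with the monotonicity of $\g$ and the fact that $\HH{\g}_\infty$ is defined as an infimum over \emph{all} coverings, so that exhibiting one suffices for an upper bound.
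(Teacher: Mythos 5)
Your argument is essentially identical to the paper's own proof: take a maximal $\delta$-separated set $\{x_1,\dots,x_N\}\subseteq A$, use maximality to conclude that the balls $B(x_i,\delta)$ cover $A$, and then bound $\HH{\g}_\infty(A)$ by subadditivity of the content together with the monotonicity of $\g$. The radius-versus-diameter point you flag is the only discrepancy with the stated inequality, and the paper's proof silently commits the same imprecision (it writes $\HH{\g}_\infty(B(x_i,\delta))\le \g(\delta)$ although these balls have diameter $2\delta$, so the covering argument literally yields $\g(2\delta)$); since the lemma is only ever invoked up to multiplicative constants (via $\gtrsim$, and with $\g$ a power function or after absorbing a doubling constant), your version with $\g(2\delta)$ serves exactly the same purpose.
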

\begin{proof}
Let $\{x_i\}^N_{i=1}$ be a maximal $\delta$-separated subset. By maximality, we can cover $A$ with balls $B(x_i,\delta)$. Therefore, for the $\g$-dimensional Hausdorff content $\HH{\g}_\infty$, we have the bound
\begin{equation}
\HH{\g}_\infty(A)\le \sum_i^N \HH{\g}_\infty (B(x_i,\delta)) \le  N \g(\delta)                                                         \end{equation} 
and it follows that $\NN(A)\ge N\ge \frac{\HH{\g}_\infty(A)}{\g(\delta)}$. 
\end{proof}
Of course, this result is meaningful when $\HH{\g}_\infty(A)>0$. We will use it in the case $\HH{\g}(A)>0$ which is equivalent to $\HH{\g}_\infty(A)>0$. For a detailed study of the properties of $\HH{\g}$ and $\HH{\g}_\infty$ see \cite{del02} and \cite{del03}.

 Note that the lemma above only requires the finiteness and the subadditivity of the Hausdorff content. The relevant feature that will be needed in our proof is the $\sigma$-subadditivity, which is a property that the Box dimension does not share.

Now we introduce the following notation and a technical lemma.
\begin{definition}
Let $\mathfrak{b}=\{b_k\}_{k\in\N}$ be a decreasing sequence with $\lim b_k=0$. For any family of balls $\mathcal{B}=\{B_j\}$ with $B_j=B(x_j;r_j)$, $r_j\le 1$, and for any set $E$, we define
\begin{equation}\label{eq:jkb}
J^\b_k:=\{j\in\N:b_k< r_j\le b_{k-1}\},
\end{equation}
and 
\begin{equation}\label{eq:ek}
E_k:=E\cap \bigcup_{j\in J^\mathfrak{b}_k}B_j.
\end{equation}
In the particular case of the dyadic scale $\mathfrak{b}=\{2^{-k}\}$, we will omit the superscript and denote
\begin{equation}\label{eq:jk}
J_{k}:=\{j\in\N:2^{-k}<r_{j}\le2^{-k+1}\}.
\end{equation} 

\end{definition}

The next lemma introduces a technique used in \cite{mr10a} to decompose  the set of all directions. 
\begin{lemma}\label{lem:part}
	Let $E$ be an $F_{\h\g}$-set for some $\h,\g\in\mathbb{H}$ with the directions in $L\subset \s$ and let  $\a=\{a_k\}_{k\in\N}\in\ell^1$ be a non-negative sequence. Let $\mathcal{B}=\{B_j\}$ be a $\delta$-covering of $E$ with $\delta< \delta_E$ and let $E_k$ and $J_k$ be as above. Define
\begin{equation}
	L_k:=\left\{e\in\s: \hh{\h}(\ell_e\cap E_k)\ge \frac{a_k}{2\|\a\|_1}\right\}.\nonumber
\end{equation}
Then $L=\cup_k L_k$.
\end{lemma}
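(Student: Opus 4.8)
The plan is to prove only the nontrivial inclusion $L\subseteq\bigcup_k L_k$; the reverse inclusion is immediate, since the line segment $\ell_e$ — and hence the defining condition for membership in $L_k$ — is only prescribed for directions $e\in L$, so each $L_k\subseteq L$ by construction. The starting observation is that the layers $\{J_k\}_{k\ge 1}$ partition the index set of $\mathcal{B}$: every ball $B_j$ has radius $r_j\in(0,1]$, hence falls into exactly one dyadic range $(2^{-k},2^{-k+1}]$. Because $\mathcal{B}$ covers $E$, this produces the decomposition $E=\bigcup_k E_k$, and intersecting with a fixed segment gives $\ell_e\cap E=\bigcup_k(\ell_e\cap E_k)$ for each direction $e$.

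First I would fix $e\in L$ and argue by contradiction, assuming $e\notin L_k$ for every $k$. By the definition of $L_k$ this means $\hh{\h}(\ell_e\cap E_k)<\frac{a_k}{2\|\a\|_1}$ for all $k$. The key tool is the countable subadditivity of the fixed-scale premeasure $\hh{\h}$ (being obtained by Method~I, it is already a genuine outer measure, hence $\sigma$-subadditive), which, applied to the decomposition above, yields
\begin{equation*}
\hh{\h}(\ell_e\cap E)\le\sum_k\hh{\h}(\ell_e\cap E_k)<\sum_k\frac{a_k}{2\|\a\|_1}=\frac{\|\a\|_1}{2\|\a\|_1}=\frac12.
\end{equation*}
On the other hand, since $E$ is an $F_{\h\g}$-set and $\delta<\delta_E$, Definition~\ref{def:Fhg} guarantees $\hh{\h}(\ell_e\cap E)>1$. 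This contradiction forces $e$ to lie in at least one $L_k$, establishing $L\subseteq\bigcup_k L_k$ and thus the claimed equality.

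I expect the only point needing genuine care to be the subadditivity step at the fixed scale $\delta$: one must observe that $\hh{\h}$, taken before the supremum over $\delta$, is a bona fide outer measure, so that covering each $\ell_e\cap E_k$ by sets of diameter $<\delta$ and forming the union gives an admissible $\delta$-covering of $\ell_e\cap E$ whose total cost is the sum of the individual costs; passing to infima then produces the displayed inequality. Everything else is the normalization bookkeeping encoded in the threshold $a_k/(2\|\a\|_1)$, whose sum over $k$ equals $\tfrac12<1$, leaving exactly the gap needed against the lower bound $\hh{\h}(\ell_e\cap E)>1$ from the $F_{\h\g}$ hypothesis. This is precisely the pigeonholing that will later let one extract a single favorable scale $k$ for a set of directions of positive $\g$-measure.
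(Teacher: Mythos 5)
Your proof is correct and is exactly the argument the paper has in mind: the paper dispatches this lemma with the single line ``the proof follows directly from the summability of $\a$,'' which is precisely your pigeonhole via the $\sigma$-subadditivity of the Method~I premeasure $\hh{\h}$ applied to $\ell_e\cap E=\bigcup_k(\ell_e\cap E_k)$, contrasting $\sum_k a_k/(2\|\a\|_1)=\tfrac12$ with the hypothesis $\hh{\h}(\ell_e\cap E)>1$ from Definition~\ref{def:Fhg}. You have merely written out the details the authors left implicit.
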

The proof follows directly from the summability of $\a$.

\section{The Kakeya type bound }\label{sec:h2g/id}

In this section we prove a generalized version of the announced bound $\dim(E)\ge 2\alpha+\beta-1$ for $E\in F_{\alpha\beta}$. We have the following theorem.
\begin{theorem}[$\h\g\to \frac{\h^2\g}{\id}$]\label{thm:hgtoh2g/x}
	Let $\g\in\H$, $\h\in\mathbb{H}_d$ be two dimension functions and let $E$ be an $F_{\h\g}$-set. For $\delta>0$, let $C(\delta)$ be as in \prettyref{eq:maximalgeneral}. For any $h\in\mathbb{H}$ such that ${\D\sum_k} \sqrt{\frac{\h^2(2^{-k})C(2^{-k})}{h(2^{-k})}}<\infty$, $\HH{h}(E)>0$.
\end{theorem}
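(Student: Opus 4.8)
The plan is to reduce everything to a single uniform lower bound on coverings: I will show that there is a positive constant $c$, depending only on $E$, $\h$, $\g$ and $h$ (not on the covering), such that $\sum_j h(\diam B_j)\ge c$ for every $\delta$-covering $\{B_j\}$ of $E$ with $\delta<\delta_E$. Taking the infimum over all such coverings gives $\hh{h}(E)\ge c$ for every small $\delta$, and hence $\HH{h}(E)=\sup_\delta\hh{h}(E)\ge c>0$. The two engines are the Kakeya maximal bound of \prettyref{pro:maximalgeneral} and the decomposition of the direction set from \prettyref{lem:part}; the entire argument turns on choosing the summable sequence $\a$ so that the hypothesis $\sum_k\sqrt{\h^2(2^{-k})C(2^{-k})/h(2^{-k})}<\infty$ is precisely what forces the final constant to be positive.

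First I would fix the data. Since $\HH{\g}(L)>0$, Frostman's lemma produces a probability measure $\mu$ supported on $L$ with $\mu(B(x,r))\lesssim\g(r)$, so that \prettyref{pro:maximalgeneral} applies with $\varphi=\g$ and the operator norm of $\mathcal{K}_{2^{-k}}$ on $L^2(d\mu)$ is controlled by $C(2^{-k})$. I then set $a_k:=\sqrt{\h^2(2^{-k})C(2^{-k})/h(2^{-k})}$; by hypothesis $\a=\{a_k\}\in\ell^1$, so \prettyref{lem:part} yields $L=\bigcup_k L_k$ with $L_k=\{e:\hh{\h}(\ell_e\cap E_k)\ge a_k/(2\|\a\|_1)\}$, and by subadditivity $\sum_k\mu(L_k)\ge\mu(L)=1$.

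The heart of the proof is a scale-by-scale lower bound on $n_k:=\#J_k$. Fix $k$, keep only the balls whose radius is comparable to $2^{-k}$ (those indexed by $J_k$), and let $f_k=\sum_{j\in J_k}\chi_{B_j}$. For a direction $e\in L_k$, the balls of $J_k$ meeting $\ell_e$ cover $\ell_e\cap E_k$ by pieces of length $\lesssim 2^{-k}$, so their number satisfies $N_k(e)\gtrsim\hh{\h}(\ell_e\cap E_k)/\h(2^{-k})\gtrsim a_k/(\|\a\|_1\,\h(2^{-k}))$, where the doubling of $\h$ is used to replace $\h(2^{-k+2})$ by $\h(2^{-k})$. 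As these balls lie in a tube of width $\sim 2^{-k}$ and length $\sim 1$ about $\ell_e$, evaluating the maximal function gives $\mathcal{K}_{2^{-k}}(f_k)(e)\gtrsim N_k(e)\,2^{-k}\gtrsim 2^{-k}a_k/(\|\a\|_1\,\h(2^{-k}))$. Integrating the square of this lower bound over $L_k$ against $\mu$ and comparing with the upper bound $\|\mathcal{K}_{2^{-k}}f_k\|_{L^2(d\mu)}^2\lesssim C(2^{-k})\|f_k\|_2^2\lesssim C(2^{-k})\,n_k\,4^{-k}$ from \prettyref{pro:maximalgeneral}, the factors $4^{-k}$ cancel and, after rearranging, $n_k\gtrsim\mu(L_k)\,a_k^2/(\|\a\|_1^2\,\h^2(2^{-k})\,C(2^{-k}))$.

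Finally I would sum over $k$. Since $\sum_j h(\diam B_j)\sim\sum_k n_k\,h(2^{-k})$, the scale bound gives $\sum_j h(\diam B_j)\gtrsim\|\a\|_1^{-2}\sum_k\mu(L_k)\,a_k^2\,h(2^{-k})/(\h^2(2^{-k})\,C(2^{-k}))$, and the whole purpose of the choice of $a_k$ is that $a_k^2\,h(2^{-k})/(\h^2(2^{-k})\,C(2^{-k}))=1$; hence the right-hand side collapses to $\|\a\|_1^{-2}\sum_k\mu(L_k)\ge\|\a\|_1^{-2}>0$, which is the desired covering-independent constant. The step I expect to be the main obstacle is the one-scale geometric estimate: keeping the tube integral of $f_k$ large while controlling $\|f_k\|_2^2\lesssim n_k4^{-k}$, which forces one to pass to a bounded-overlap subfamily (or a separated subset of ball centers) at each dyadic scale, and where $\h\in\H_d$ is indispensable for comparing $\h$ across the scales $2^{-k},2^{-k+1},2^{-k+2}$. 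The remaining passages — from a fixed covering to $\hh{h}$ and then to $\HH{h}$, and the harmless constants from Frostman's lemma and the maximal inequality — are routine bookkeeping.
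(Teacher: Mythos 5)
Your proposal is correct and takes essentially the same route as the paper: the same Frostman measure, the identical choice $a_k^2=\h^2(2^{-k})C(2^{-k})/h(2^{-k})$ fed into \prettyref{lem:part}, the same scale-by-scale confrontation of a tube-average lower bound for the Kakeya maximal function with the $L^2$ inequality of \prettyref{pro:maximalgeneral}, and the same concluding summation $\sum_k h(2^{-k})\,\#J_k\gtrsim\sum_k\mu(L_k)\ge\mu(L)>0$. The only technical divergence is in the one-scale function: the paper uses the rescaled indicator of the union $F_k=\bigcup_{j\in J_k}B_j$, for which $\|f\|_2^2\lesssim\#J_k\,\h^2(2^{-k})$ is immediate while the overlap difficulty migrates into the tube lower bound (resolved there by the argument cited from \cite{mr10a}), whereas your sum of indicators makes the tube lower bound trivial but requires exactly the separated-centers/enlarged-balls thinning you flag in order to get $\|f_k\|_2^2\lesssim\#J_k\,4^{-k}$ --- a correct and routine fix, since the thinned enlarged family still covers $E_k$ and its cardinality is at most $\#J_k$.
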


\begin{proof}
Let $E\in F_{\h\g}$ and let $\{B_j\}_{j\in\N}$ be a covering of $E$ by balls with $B_j=B(x_j;r_j)$. We need to bound $\sum_j h(2r_j)$ from below. Since $h$ is non-decreasing, it suffices to obtain the  bound 

\begin{equation}\label{eq:sum}
	\sum_j h(r_j)\gtrsim 1
\end{equation}  
for any $h\in \mathbb{H}$ satisfying the hypothesis of the theorem. 

Define $\a=\{a_k\}$ by $a^2_k=\frac{\h^2(2^{-k})C(2^{-k})}{h(2^{-k})}$. Also define, as in the previous section, for each $k\in\N$, $J_k=\{j\in \N: 2^{-k}< r_j\le 2^{-k+1}\}$ and $E_k=E\cap\D\cup_{j\in J_k}B_j$.
Since by hypothesis $\a\in\ell^1$, we can apply \prettyref{lem:part} to obtain the decomposition of the set of directions as $L=\bigcup_k L_k$ associated to this choice of $\a$.

We will apply the maximal function inequality to a weighted union of indicator functions. For each $k$, let $F_k=\D\bigcup_{j\in J_k}B_j$ and define the function 
\[
f:=\h(2^{-k})2^k\D\chi_{F_k}.
\]

We will use the $L^2$ norm estimates for the maximal function. We can compute directly the $L^2$ norm of $f$:
\begin{eqnarray*}
    	\|f\|_{2}^{2} & = & \h^2(2^{-k})2^{2k}\int_{\cup_{J_k}B_{j}}dx\\
                  & \lesssim & \h^2(2^{-k})2^{2k}\sum_{j\in J_{k}}r_j^2 \\
		& \lesssim & \h^2(2^{-k}) \#J_k,
	\end{eqnarray*}
since $r_j\le 2^{-k+1}$ for $j\in J_k$. Therefore
\begin{equation}\label{eq:L2above}
	\|f\|_{2}^{2}\lesssim \#J_k \h^2(2^{-k}).
\end{equation} 
The same arguments used in the proof of Theorem 3.1 in \cite{mr10a} allows us to obtain a lower bound for the maximal function. Essentially, the maximal function is pointwise bounded from below by the average of $f$ over the tube centered on the line segment $\ell_e$ for any $e\in L_k$. Therefore, we have the following bound for the $(L^2,\mu)$ norm. Here, $\mu$ is a measure supported on $L$ that obeys the law $\mu(B(x,r)\le\g(r)$ for any ball $B(x,r)$ given by Frostman's lemma.
\begin{equation}\label{eq:maximalbelow}
	\|\mathcal{K}_{2^{-k+1}}(f)\|_{L^2(d\mu)}^2\gtrsim a_k^2\mu(L_k)=\frac{\mu(L_k)\h^2(2^{-k})C(2^{-k})}{h(2^{-k})}.
\end{equation} 
Combining \prettyref{eq:maximalbelow} with the maximal inequality \prettyref{eq:maximalgeneral}, we obtain
\begin{equation*}
	\frac{\mu(L_k)\h^2(2^{-k})C(2^{-k})}{h(2^{-k})}\lesssim \|\mathcal{K}_{2^{-k+1}}(f)\|_2^2\lesssim C(2^{-k+1})\|f\|_2^2\le C(2^{-k})\|f\|_2^2.
\end{equation*} 
We also have the bound \prettyref{eq:L2above}, which implies that 
\begin{equation}
	\frac{\mu(L_k)}{h(2^{-k})}\lesssim \#J_k.\nonumber
\end{equation}

Now we are able to estimate the sum in \prettyref{eq:sum}. Let $h$ be a dimension function satisfying the hypothesis of \prettyref{thm:hgtoh2g/x}. We have
\begin{eqnarray*}
	\sum_j h(r_j)	& \ge & \sum_k h(2^{-k})\#J_k\\
			& \gtrsim & \sum_k \mu(L_k) \ge\mu(L)>0.
\end{eqnarray*}
\end{proof}

\begin{corollary}\label{cor:coroh2g/x}
	Let $E$ an $F^+_{\alpha\beta}$-set. If $h$ is any dimension function satisfying
\begin{equation}
 h(x)\ge C x^{2\alpha+\beta-1} \log^{\theta}(\frac{1}{x})
\end{equation}  
for $\theta>2$, then $\HH{h}(E)>0$.
\end{corollary}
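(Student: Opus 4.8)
The corollary is an immediate specialization of Theorem \ref{thm:hgtoh2g/x} to the power-function case, so the plan is to verify that the hypotheses of that theorem are met. First I would set up the translation between the $F^+_{\alpha\beta}$ language and the $F_{\h\g}$ language: if $E$ is an $F^+_{\alpha\beta}$-set, then taking $\h=h_\alpha$ (that is, $\h(x)=x^\alpha$) and $\g=h_\beta$ (so $\g(x)=x^\beta$), the condition $\HH{\alpha}(\ell_e\cap E)>0$ together with the standard pigeonholing reduction (Definition \ref{def:Fhg}) exhibits $E$ as an $F_{\h\g}$-set. Note $\h=h_\alpha$ is concave for $\alpha\le 1$, hence lies in $\mathbb{H}_d$, so the doubling hypothesis on $\h$ required by the theorem holds.

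Next I would identify the constant $C(\delta)$ for this choice of $\g$. Since $\g(r)=r^\beta$, Frostman's lemma gives a probability measure $\mu$ on $L$ with $\mu(B(x,r))\lesssim r^\beta$, so we are in the situation $\varphi(u)=u^\beta$. By the computation in the remark following Proposition \ref{pro:maximalgeneral} (equation \prettyref{eq:maximalbetapower}), this yields $C(\delta)=\int_\delta^1 u^{\beta-2}\,du\sim \delta^{\beta-1}$ for $\beta<1$ (and $C(\delta)\sim\log(1/\delta)$ when $\beta=1$). I would carry the generic case; the endpoint $\beta=1$ only improves the power and changes the log exponent bookkeeping.

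The core of the argument is then to check the summability hypothesis of Theorem \ref{thm:hgtoh2g/x} for the proposed test function $h(x)=C x^{2\alpha+\beta-1}\log^{\theta}(1/x)$. Substituting $\h^2(2^{-k})=2^{-2\alpha k}$, $C(2^{-k})\sim 2^{(1-\beta)k}$, and $h(2^{-k})\sim 2^{-(2\alpha+\beta-1)k}k^{\theta}$ into the general term, the powers of $2$ cancel exactly:
\begin{equation*}
\frac{\h^2(2^{-k})\,C(2^{-k})}{h(2^{-k})}\sim \frac{2^{-2\alpha k}\,2^{(1-\beta)k}}{2^{-(2\alpha+\beta-1)k}\,k^{\theta}}=\frac{1}{k^{\theta}},
\end{equation*}
so the summand $\sqrt{\h^2(2^{-k})C(2^{-k})/h(2^{-k})}$ behaves like $k^{-\theta/2}$. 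Hence the series $\sum_k k^{-\theta/2}$ converges precisely when $\theta/2>1$, i.e. $\theta>2$, which is exactly the stated hypothesis. Invoking Theorem \ref{thm:hgtoh2g/x} then gives $\HH{h}(E)>0$.

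The computation is entirely routine; the only point requiring a little care is the exact cancellation of the exponents, which is what forces the particular exponent $2\alpha+\beta-1$ and explains why the correct secondary gauge is a logarithmic one. I would double-check the $\beta=1$ endpoint separately, since there $C(\delta)$ picks up a logarithmic factor and one must confirm that the threshold on $\theta$ is not degraded. I do not anticipate any genuine obstacle beyond this bookkeeping, since the substance of the argument lives entirely in Theorem \ref{thm:hgtoh2g/x}.
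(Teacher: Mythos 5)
Your proposal is correct and follows essentially the same route as the paper's proof: specialize \prettyref{thm:hgtoh2g/x} to $\h(x)=x^\alpha$, $\g(x)=x^\beta$, use $C(\delta)\lesssim \delta^{\beta-1}$, and observe that the powers of $2$ cancel so the series reduces to $\sum_k k^{-\theta/2}$, convergent exactly when $\theta>2$. Your flag about the endpoint $\beta=1$ is a genuine subtlety that the paper's one-line proof glosses over: there $C(\delta)\sim\log(\frac{1}{\delta})$ rather than $\delta^{\beta-1}$, the summand becomes $k^{(1-\theta)/2}$, and the argument then requires $\theta>3$.
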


\begin{proof}
 It follows directly, since in this case we have $C(\delta)\lesssim\delta^{\beta-1}$, and therefore the sum in \prettyref{thm:hgtoh2g/x} is 
\begin{eqnarray*}
\sum_k \sqrt{\frac{\h^2(2^{-k})C(2^{-k})}{h(2^{-k})}} &\lesssim& \sum_k \sqrt{\frac{2^{-k2\alpha}2^{-k(\beta-1)}}{h(2^{-k})}}\\
& \le & \sum_k \sqrt{\frac{2^{-k(2\alpha+\beta-1)}}{(2^{-k})^{2\alpha+\beta-1} \log^{\theta}(2^k)}}\\
& = & \sum_k \frac{1}{k^{\frac{\theta}{2}}}<\infty.
\end{eqnarray*} 
\end{proof}

\begin{remark}
Note that the bound $\dim(E)\ge 2\alpha+\beta-1$ for $E\in F_{\alpha\beta}$ follows directly from this last corollary.
\end{remark}

\section{The combinatorial bound}\label{sec:hsqrtg}

In this section we deal with the bound $\h\g\to\h\sqrt{\g}$, which is the significant bound near the endpoint $\alpha=\beta=0$ and generalizes the bound $\dim(E)\ge \frac{\beta}{2}+\alpha$ for $E\in F_{\alpha\beta}$. Note that the second bound in \prettyref{eq:dim_ab} is meaningless for small values of $\alpha$ and $\beta$. We will consider separately the cases of $\h$ being zero dimensional or positive dimensional. In the next theorem, the additional condition on $\h$ reflects the positivity of the dimension function.

We believe that it would be helpful to cite, without the proofs, two relevant lemmas used in \cite{mr10a}. 

The first is a ``splitting lemma'', which says that a linear set with positive $\h$-dimensional mass can be splitted into two well separated linear subsets.

\begin{lemma}\label{lem:split}
 Let $\h\in\mathbb{H}$, $\delta>0$, $I$ an interval and $E\sub I$. Let $\eta>0$ be such that $\h^{-1}(\frac{\eta}{8})<\delta$ and $\hh{\h}(E)\ge\eta>0$. Then there exist two subintervals $I^-$, $I^+$ that are $\h^{-1}(\frac{\eta}{8})$-separated and  with $\hh{\h}(I^{\pm}\cap E)\gtrsim \eta$.
\end{lemma}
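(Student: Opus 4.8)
The plan is to reduce the splitting to a one-dimensional intermediate value argument for the ``left content'' function of $E$, and then to carve out a gap of the prescribed width while controlling the loss by subadditivity of the premeasure.

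First I would set $I=[a,b]$ and define $f(t)=\hh{\h}(E\cap[a,t])$ for $t\in I$. This function is non-decreasing with $f(a)=0$ and $f(b)=\hh{\h}(E)\ge\eta$. The key preliminary observation is that $f$ is in fact \emph{continuous}: for $0<r<\delta$ the single interval $[t,t+r]$ is an admissible element of a $\delta$-cover, so by subadditivity $0\le f(t+r)-f(t)\le \hh{\h}(E\cap[t,t+r])\le\h(r)$, and symmetrically on the left. Since $\h(0)=0$ and $\h$ is right continuous, $\h(r)\to 0$ as $r\to0^+$, which forces both one-sided oscillations of $f$ to vanish. Continuity of $f$ together with $f(a)=0$ and $f(b)\ge\eta>\eta/2$ then lets me apply the intermediate value theorem to produce a point $t_0\in(a,b)$ with $f(t_0)=\eta/2$.

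With this balance point in hand I would read off lower bounds on both sides. On the left, $\hh{\h}(E\cap[a,t_0])=\eta/2$ by construction. On the right, writing $E=(E\cap[a,t_0])\cup(E\cap[t_0,b])$ and using subadditivity gives $\eta\le\hh{\h}(E)\le f(t_0)+\hh{\h}(E\cap[t_0,b])$, hence $\hh{\h}(E\cap[t_0,b])\ge\eta/2$ as well. It remains to separate the two halves. Setting $s=\h^{-1}(\eta/8)$, which satisfies $s<\delta$ by hypothesis, I would take $I^-=[a,t_0-s]$ and $I^+=[t_0,b]$, so that the removed middle gap $(t_0-s,t_0)$ has length $s$ and the two intervals are $\h^{-1}(\eta/8)$-separated. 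One checks $t_0-s>a$, since otherwise $[a,t_0]$ would be a single admissible cover element of diameter at most $s<\delta$, forcing $f(t_0)\le\h(s)\le\eta/8<\eta/2$. The right interval already carries content $\ge\eta/2\gtrsim\eta$, and for the left one, subadditivity applied to $E\cap[a,t_0]=(E\cap[a,t_0-s])\cup(E\cap[t_0-s,t_0])$ yields $\hh{\h}(E\cap I^-)\ge f(t_0)-\hh{\h}(E\cap[t_0-s,t_0])\ge \eta/2-\h(s)$.

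The hard part will be to guarantee that the content swallowed by the gap is genuinely negligible, that is, that $\h(s)=\h(\h^{-1}(\eta/8))\le\eta/8$. This is exactly where the constant $8$ is spent, leaving ample slack: with the appropriate right-continuous convention for the generalized inverse one has $\h(\h^{-1}(y))\le y$ outside degenerate configurations, and those degenerate configurations are precisely the ones excluded by the hypothesis $\hh{\h}(E)\ge\eta>0$ (a dimension function constant on $[0,s)$ with $s<\delta$ would force every $\delta$-content to vanish, contradicting $\hh{\h}(E)\ge\eta$). Granting this, $\hh{\h}(E\cap I^-)\ge \eta/2-\eta/8=3\eta/8\gtrsim\eta$, so both subintervals carry content $\gtrsim\eta$ while being $\h^{-1}(\eta/8)$-separated, which completes the argument.
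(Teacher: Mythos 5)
Your overall architecture is sound and is essentially the standard argument for this lemma (which the present paper quotes from \cite{mr10a} without proof): the continuity of the mass function $f(t)=\hh{\h}(E\cap[a,t])$ via the estimate $f(t+r)-f(t)\le\h(r)$, the intermediate value theorem at level $\eta/2$, and the control of the discarded middle gap by subadditivity are all correct. The genuine gap is in the very last step, where you need $\h\bigl(\h^{-1}(\eta/8)\bigr)\le\eta/8$. Your justification --- that the configurations where this fails are ``precisely the ones excluded by the hypothesis $\hh{\h}(E)\ge\eta>0$'' --- is false. The class $\mathbb{H}$ only requires $\h$ to be non-decreasing, right continuous and vanishing at $0$, so $\h$ may jump \emph{across} the value $\eta/8$: take $\h(x)=\min(x,\eta/16)$ for $0\le x<x_0$ and $\h(x)=10\eta$ for $x\ge x_0$, with $x_0<\delta$. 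Then $\h\in\mathbb{H}$, and $s:=\h^{-1}(\eta/8)=x_0$ under every reasonable convention for the generalized inverse (no convention helps, because the function value at the jump point exceeds $\eta/8$ regardless), yet $\h(s)=10\eta$, so your lower bound $\hh{\h}(E\cap I^-)\ge\eta/2-\h(s)$ is vacuous and your check that $t_0-s>a$ also collapses. Moreover, this configuration is compatible with all the hypotheses: for $E=I$ a sufficiently long interval one has $\hh{\h}(E)\ge\eta$, since every covering set costs at least a fixed amount per unit length. Your parenthetical argument only rules out $\h$ vanishing identically on $[0,s)$, which is a far weaker degeneracy than a jump over the value $\eta/8$.

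The good news is that the gap is local and repairable without touching the rest of your proof: simply never evaluate $\h$ at $s$ itself. Cover the discarded gap $[t_0-s,t_0]$ by \emph{two} intervals of diameter $s/2$. Since $s/2<s=\inf\{x:\h(x)\ge\eta/8\}$ and $s/2<\delta$, these are admissible covering sets and monotonicity gives $\h(s/2)<\eta/8$, hence
\begin{equation*}
\hh{\h}\bigl(E\cap[t_0-s,t_0]\bigr)\le 2\,\h(s/2)<\tfrac{\eta}{4},
\end{equation*}
so that $\hh{\h}(E\cap I^-)\ge \eta/2-\eta/4=\eta/4\gtrsim\eta$. The same two-piece cover rescues the endpoint check: if $t_0-s\le a$, then $f(t_0)\le 2\,\h(s/2)<\eta/4<\eta/2$, a contradiction. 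With this replacement (and the unchanged bound $\hh{\h}(E\cap I^+)\ge\eta/2$), your proof is complete for all $\h\in\mathbb{H}$, jumps included.
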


The second lemma is the combinatorial ingredient in the proof of both \prettyref{thm:hgtohsqrtg} and \prettyref{thm:hgtohsqrtg,hzero,gpos}. This lemma provides an estimate on the number of lines with certain separation that intersect two balls of a given size.

\begin{lemma}\label{lem:conobolas}
Let $\mathfrak{b}=\{b_k\}_{k\in\N}$ be a decreasing sequence with $\lim b_k=0$. Given a family of balls $\mathcal{B}=\{B(x_j;r_j)\}$, we define $J^\b_k$ as in \prettyref{eq:jkb} and let $\{e_i\}_{i=1}^{M_k}$ be a $b_k$-separated set of directions. Assume that for each $i$ there are two line segments $I_{e_i}^+$ and $I_{e_i}^-$ lying on a line in the direction $e_i$  that are $s_k$-separated for some given $s_k$ 
Define $\Pi_k=J^\b_{k}\times
J^\b_{k}\times\{1,..,M_k\}$ and $\mathcal{L}^\b_k$ by

\begin{equation}
\mathcal{L}^\b_k:=\left\{(j_{+},j_{-},i)\in \Pi_k:
    	I_{e_{i}}^-\cap B_{j_-}\neq\emptyset\ 
	I_{e_{i}}^+\cap B_{j_+}\neq\emptyset
 \right\}.\nonumber
\end{equation} 
If $\frac{1}{5}s_k>b_{k-1}$ for all $k$, then
\begin{equation}
\#\mathcal{L}^\b_k\lesssim\frac{b_{k-1}}{b_k}\frac{1}{s_k}\left(\#J^\b_{k}\right)^{2}.\nonumber
\end{equation} 
\end{lemma}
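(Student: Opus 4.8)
The plan is to bound $\#\mathcal{L}^\b_k$ by a double-counting argument: first I would fix a pair of balls $(B_{j_+},B_{j_-})$ with $j_+,j_-\in J^\b_k$ and count how many directions $e_i$ are compatible with this pair, and then sum over the $(\#J^\b_k)^2$ possible pairs. For a fixed pair, set
\[
N(j_+,j_-):=\#\left\{i: I^-_{e_i}\cap B_{j_-}\neq\emptyset,\ I^+_{e_i}\cap B_{j_+}\neq\emptyset\right\}.
\]
Grouping the defining relation of $\mathcal{L}^\b_k$ according to the pair of balls gives $\#\mathcal{L}^\b_k=\sum_{j_+,j_-}N(j_+,j_-)$, so it suffices to establish the uniform bound $N(j_+,j_-)\lesssim \frac{b_{k-1}}{b_k}\frac{1}{s_k}$.

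The heart of the matter is a geometric estimate showing that the directions compatible with a fixed pair of balls fill only a short arc of $\s$. If $(j_+,j_-,i)\in\mathcal{L}^\b_k$, I would pick points $p_+\in I^+_{e_i}\cap B_{j_+}$ and $p_-\in I^-_{e_i}\cap B_{j_-}$; since the two segments are $s_k$-separated we have $|p_+-p_-|\ge s_k$, and $e_i$ is the direction of the chord $\overline{p_+p_-}$ up to the error introduced by the radii of the two balls. Because $r_{j_\pm}\le b_{k-1}$ and, by the hypothesis $\tfrac15 s_k>b_{k-1}$, the balls are genuinely separated (their centers satisfy $|x_{j_+}-x_{j_-}|\ge s_k-2b_{k-1}>\tfrac35 s_k$), the set of directions realized by some chord joining a point of $B_{j_+}$ to a point of $B_{j_-}$ is contained in an arc of angular width $\lesssim \frac{r_{j_+}+r_{j_-}}{|x_{j_+}-x_{j_-}|}\lesssim \frac{b_{k-1}}{s_k}$. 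This is exactly where the separation condition $\tfrac15 s_k>b_{k-1}$ is used: it prevents the ball radii from being comparable to the separation, which would leave the angular window uncontrolled.

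Since the $\{e_i\}$ are $b_k$-separated, the number of them lying in an arc of width $\lesssim b_{k-1}/s_k$ is at most $\lesssim \frac{b_{k-1}/s_k}{b_k}=\frac{b_{k-1}}{b_k s_k}$, the additive constant from the separated-points count being harmlessly absorbed since $s_k\le 1$ forces $\frac{b_{k-1}}{b_k s_k}\gtrsim 1$. This yields $N(j_+,j_-)\lesssim \frac{b_{k-1}}{b_k s_k}$, and summing over the $(\#J^\b_k)^2$ pairs gives the claimed estimate. I expect the main obstacle to be precisely the geometric width estimate: one must check carefully that, with balls of unequal radii up to $b_{k-1}$ and with the $s_k$-separation imposed on the \emph{segments} rather than on the balls, the compatible directions really do form a single arc of width $\lesssim b_{k-1}/s_k$, and that the passage from this angular window to a count of $b_k$-separated directions on $\s$ remains valid in the relevant regime.
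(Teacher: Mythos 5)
The paper itself states this lemma without proof, citing \cite{mr10a}, and your argument is correct and essentially reproduces the proof given there: fix a pair of balls, observe that any compatible direction is the direction of a chord joining the two balls with length at least $s_k$, deduce from $r_{j_\pm}\le b_{k-1}<\tfrac15 s_k$ that these directions lie in an arc of width $\lesssim b_{k-1}/s_k$, count $b_k$-separated points in that arc, and sum over the $(\#J^\b_k)^2$ pairs. Your accounting of where the hypothesis $\tfrac15 s_k>b_{k-1}$ enters (keeping the center distance $\gtrsim s_k$ so the angular window is controlled) and of why the additive constant in the separated-point count is absorbed is exactly right.
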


With these two lemmas we are now ready to prove the main result of this section. We have the following theorem. Recall that $h_\alpha(x)=x^\alpha$.

\begin{theorem}[$\h\g\to\h\sqrt{\g}$, $\h\succ h_\alpha$]\label{thm:hgtohsqrtg}
	Let $\g\in\H$, $\h\in\mathbb{H}_d$ be two dimension functions such that $\h(x)\lesssim x^\alpha$ for some $0<\alpha<1$ and let $E$ be an $F_{\h\g}$-set. Let $h\in\mathbb{H}$ with $h\prec \h\sqrt{\g}$.  If ${\D\sum_k} \left(\frac{\h(2^{-k})\sqrt{\g}(2^{-k})}{h(2^{-k})}\right)^{\frac{2\alpha}{2\alpha+1}}<\infty$, then $\HH{h}(E)>0$.
\end{theorem}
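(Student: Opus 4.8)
The plan is to prove the bound by combining the combinatorial Lemmas \ref{lem:split} and \ref{lem:conobolas} with the entropy estimate of Lemma \ref{lem:entropy}, following the same weighted-decomposition strategy used in the Kakeya-type argument of \prettyref{thm:hgtoh2g/x}. As before, I would start from an arbitrary covering $\{B_j\}$ of $E$ by balls $B_j=B(x_j;r_j)$ with $\delta<\delta_E$, and reduce the statement to proving a uniform lower bound $\sum_j h(r_j)\gtrsim 1$. I would introduce a summable weight sequence $\a=\{a_k\}\in\ell^1$, choose $J_k$ and $E_k$ along the dyadic scale as in the preliminaries, and apply \prettyref{lem:part} to decompose the direction set as $L=\bigcup_k L_k$, where each $L_k$ collects the directions $e$ for which the slice $\ell_e\cap E_k$ still carries a definite fraction of the $\h$-mass. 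The weights $a_k$ will have to be calibrated to the quantity $\left(\frac{\h(2^{-k})\sqrt{\g}(2^{-k})}{h(2^{-k})}\right)^{\frac{2\alpha}{2\alpha+1}}$ appearing in the hypothesis, so that the final summation closes.

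The core of the argument is a double-counting of incidences between lines and balls inside a single scale $k$. On each $L_k$ I would first invoke the splitting lemma (\prettyref{lem:split}) to replace every line segment $\ell_e\cap E_k$ by two well-separated pieces $I_e^+$, $I_e^-$, each still carrying positive $\hh{\h}$-mass $\gtrsim a_k/\|\a\|_1$; the separation $s_k$ is controlled by $\h^{-1}$ evaluated at this mass. Since $\HH{\g}(L)>0$, Lemma \ref{lem:entropy} gives a lower bound of order $\g(2^{-k})^{-1}$ (up to $\HH{\g}_\infty$) on the number $M_k$ of $2^{-k}$-separated directions available in $L_k$. I would then apply the incidence bound of \prettyref{lem:conobolas} with $\b=\{2^{-k}\}$, which upper-bounds $\#\mathcal{L}_k^\b$ by $\frac{1}{s_k}(\#J_k)^2$ up to constants. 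On the other hand, because each retained direction has both of its separated pieces covered by balls from $J_k$ carrying mass $\gtrsim a_k$, each such direction forces many incidence pairs, yielding a \emph{lower} bound on $\#\mathcal{L}_k^\b$ of the form $M_k\cdot\left(a_k/\h(2^{-k})\right)^2$. Comparing the two bounds produces the key per-scale inequality relating $\#J_k$ to $M_k$, $a_k$, $s_k$, $\h$ and $\g$.

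The final step is to sum the resulting per-scale estimate over $k$ and recognize the series in the hypothesis. The condition $\h(x)\lesssim x^\alpha$ enters precisely here: it lets me bound $s_k^{-1}=\h^{-1}(\cdot)^{-1}$ by a power of the mass level, which after optimizing the weights $a_k$ produces the exponent $\frac{2\alpha}{2\alpha+1}$; the requirement $h\prec\h\sqrt\g$ guarantees the base of that power tends to zero so the calibration is legitimate. Putting the pieces together, $\sum_k h(2^{-k})\#J_k\gtrsim\sum_k\mu(L_k)\ge\mu(L)>0$, which gives \eqref{eq:sum} and hence $\HH{h}(E)>0$.

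The delicate point I expect to be the main obstacle is the \emph{joint calibration} of the weight sequence $a_k$ against both the separation $s_k=\h^{-1}(a_k/\|\a\|_1)$ and the incidence count: $a_k$ appears simultaneously inside $\h^{-1}$ (controlling $s_k$, hence the upper incidence bound) and as the mass threshold (controlling the lower incidence bound), so the two occurrences must be balanced so that the summability hypothesis on $\left(\frac{\h\sqrt\g}{h}\right)^{\frac{2\alpha}{2\alpha+1}}$ is exactly what forces $\a\in\ell^1$. Getting the power $\frac{2\alpha}{2\alpha+1}$ to emerge correctly — rather than some other exponent — will require careful use of the polynomial bound $\h(x)\lesssim x^\alpha$ (equivalently a lower bound on $\h^{-1}$) and is where the hypothesis $\h\succ h_\alpha$ does its real work; the separate treatment of the zero-dimensional case in \prettyref{thm:hgtohsqrtg,hzero,gpos} is presumably needed precisely because this polynomial control fails there.
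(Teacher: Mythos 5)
Your proposal is correct and follows essentially the same route as the paper's own proof: the same dyadic decomposition via \prettyref{lem:part} with weights calibrated to $\left(\frac{\h\sqrt{\g}}{h}\right)^{\frac{2\alpha}{2\alpha+1}}$ evaluated at $2^{-k}$, the same splitting (\prettyref{lem:split}), entropy (\prettyref{lem:entropy}) and incidence (\prettyref{lem:conobolas}) double count at each scale, and the same use of $\h^{-1}(x)\gtrsim x^{1/\alpha}$ to make the per-scale factors telescope against the summability hypothesis. One cosmetic correction: this argument runs on Hausdorff content rather than on a Frostman measure, and the per-scale bound yields square roots, so your final display should read $\sum_k h(2^{-k})\#J_k\gtrsim \sum_k \HH{\g}_\infty(L_k)^{1/2}\ge \left(\HH{\g}_\infty(L)\right)^{1/2}>0$ (using $\sigma$-subadditivity of the content), not $\sum_k\mu(L_k)\ge\mu(L)$; the conclusion is unaffected.
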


\begin{proof}
Let $E\in F_{\h\g}$ and let $\{B_j\}_{j\in\N}$ be a covering of $E$ by balls with $B_j=B(x_j;r_j)$. Define $\Delta=\frac{\h\sqrt{\g}}{h}$ and consider the sequence $\a=\left\{\Delta^{\frac{2\alpha}{2\alpha+1}}(2^{-k})\right\}_{k}$. Also define, as in the previous section, for each $k\in\N$, $J_k=\{j\in \N: 2^{-k}< r_j\le 2^{-k+1}\}$ and $E_k=E\cap\D\cup_{j\in J_k}B_j$.
Since by hypothesis $\a\in\ell^1$, we can apply \prettyref{lem:part} to obtain the decomposition of the set of directions as $L=\bigcup_k L_k$ associated to this choice of $\a$, where $L_k$ is defined as 

\begin{equation}
	L_k:=\left\{e\in\s: \hh{\h}(\ell_e\cap E_k)\ge \frac{a_k}{2\|\a\|_1}\right\}.\nonumber
\end{equation}
We can apply \prettyref{lem:split} with $\eta=\frac{a_k}{2\|\a\|_1}$ to $\ell_e\cap E_k$. Therefore we obtain two intervals $I_e^-$ and $I_e^+$, contained in $\ell_e$ with
\begin{equation}
	\hh{\h}(I^{\pm}_e\cap E_k)\gtrsim a_k\nonumber
\end{equation} 
that are $\h^{-1}(ra_k)$-separated for $r=\frac{1}{16\|\a\|_1}$.

Now, let $\{ e^k_{j}\}_{j=1}^{N_k}$ be a $2^{-k}$-separated subset of $L_k$. Taking into account the estimate for the entropy given in \prettyref{lem:entropy}. We obtain then that
\begin{equation}\label{eq:entropy_Hinfty}
 N_k\gtrsim\frac{\HH{\g}_\infty(L_k)}{\g(2^{-k})}.
\end{equation} 
Define $\Pi_k:=J_{k}\times
J_{k}\times\{1,..,N	_k\}$ and 
\begin{equation}\label{eq:tauk}
\mathcal{T}_k:=\left\{(j_{-},j_{+},i)\in \Pi_k:
    	I_{e_{i}}^-\cap E_{k}\cap B_{j_-}\neq\emptyset\ 
	I_{e_{i}}^+\cap E_{k}\cap B_{j_+}\neq\emptyset
 \right\}.
\end{equation} 
The idea is to count the elements of $\mathcal{T}_k$ in two ways. If we fix a pair $j_{-}$ and $j_{+}$ and count for how many values of $i$ the triplet $(j_{-},j_{+},i)$ belongs to $\mathcal{T}_k$, we obtain, by using \prettyref{lem:conobolas} for the choice $\b=\{2^{-k}\}$, that 
\begin{equation}\label{eq:jfijo}
\#\mathcal{T}_k\lesssim\frac{1}{\h^{-1}(ra_k)}\left(\#J_{k}\right)^{2}.
\end{equation} 
Second, fix $i$. In this case, we have by hypothesis that  $\hh{\h}(I_{e_i}^{+}\cap E_{k})\gtrsim a_k$, so $\sum_{j_{+}}\h(r_{j_+})\gtrsim a_k$. Therefore, 

\begin{equation}
 a_k\lesssim\sum_{(j_-,j_+,i)\in\mathcal{T}_k}\h(r_{j_+})\le K \h(2^{-k}),\nonumber
\end{equation} 

where $K$ is the number of elements of the sum. Therefore $K\gtrsim \frac{a_k}{\h(2^{-k})}$. The same holds for $j_{-}$, so
\begin{equation}\label{eq:ifijo}
\#\mathcal{T}_k\gtrsim N_k\left(\frac{a_k}{\h(2^{-k})}\right)^2.
\end{equation} 
Combining the two bounds,
\begin{eqnarray*}
\#J_{k} & \gtrsim &
(\#\mathcal{T}_k)^{1/2}\h^{-1}(ra_k)^{1/2}\\
 & \gtrsim & N_k^{1/2}\frac{a_k}{\h(2^{-k})}\h^{-1}(ra_k)^{1/2}.
\end{eqnarray*}
Therefore, for any $h\in\H$ as in the hypothesis of the theorem, we have the estimate 
\begin{eqnarray}\label{eq:dompower}
\sum_j h(r_j)& \gtrsim &\sum_{k} \frac{(\h\sqrt{\g})(2^{-k})}{\Delta(2^{-k})}\#J_k\\
& \gtrsim &\sum_{k} \frac{a_k\h^{-1}(ra_k)^{\frac{1}{2}}\sqrt{\g}(2^{-k})N_k^{\frac{1}{2}}}{\Delta(2^{-k})}.
\end{eqnarray}
Recall now  that from \prettyref{eq:entropy_Hinfty} we have $\sqrt{\g}(2^{-k})N_k^{\frac{1}{2}}\gtrsim \HH{\g}_\infty(L_k)^\frac{1}{2}$. In addition, $\h(x)\lesssim x^\alpha$, which implies that $\h^{-1}(x)\gtrsim x^{\frac{1}{\alpha}}$. Therefore we obtain the bound
\begin{eqnarray*}
\sum_j h(r_j)& \gtrsim &  \sum_{k}\frac{\HH{\g}_\infty(L_k)^{1/2}a_k^{\frac{1+2\alpha}{2\alpha}}}{\Delta(2^{-k})}\\
&=& \sum_{k}\HH{\g}_\infty(L_k)^{1/2}\gtrsim 1\nonumber.
\end{eqnarray*}
In the last inequality, we used the $\sigma$-subadditivity of $\HH{\g}_\infty$.
\end{proof}

\begin{corollary}\label{cor:corohsqrtg}
	Let $E$ be an $F^+_{\alpha\beta}$-set for $\alpha,\beta>0$. If $h$ is a dimension function satisfying $h(x)\ge C x^{\frac{\beta}{2}+\alpha} \log^{\theta}(\frac{1}{x})$ for $\theta>\frac{1+2\alpha}{2\alpha}$, then $\HH{h}(E)>0$.

\end{corollary}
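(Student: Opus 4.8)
The plan is to obtain this as a direct specialization of \prettyref{thm:hgtohsqrtg} to the power-function case $\h(x)=x^\alpha$ and $\g(x)=x^\beta$. First I would check that these choices meet the structural hypotheses of the theorem: both are legitimate dimension functions, $\h(x)=x^\alpha$ is concave for $0<\alpha\le 1$ and hence lies in $\mathbb{H}_d$, and the requirement $\h(x)\lesssim x^\alpha$ holds trivially with equality. Since $E\in F^+_{\alpha\beta}$, the same pigeonholing reduction recalled from \cite{mr10a} that underlies \prettyref{def:Fhg} lets me regard $E$ as an $F_{\h\g}$-set for this pair $(\h,\g)$, exactly as in the proof of \prettyref{cor:coroh2g/x}.

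It then remains to verify the two analytic conditions imposed on the test function $h(x)=Cx^{\beta/2+\alpha}\log^\theta(1/x)$. For the ordering $h\prec\h\sqrt{\g}$, I would compute $\h\sqrt{\g}(x)=x^{\alpha}x^{\beta/2}=x^{\alpha+\beta/2}$, so that $\h\sqrt{\g}(x)/h(x)=1/\bigl(C\log^\theta(1/x)\bigr)\to 0$ as $x\to 0^+$, giving $h\prec\h\sqrt{\g}$ for every $\theta>0$. For the summability hypothesis the essential point is that the exponents $\alpha+\beta/2$ cancel exactly, leaving
\begin{equation*}
\frac{\h(2^{-k})\sqrt{\g}(2^{-k})}{h(2^{-k})}=\frac{1}{C\log^\theta(2^k)}\sim\frac{1}{k^{\theta}}.
\end{equation*}
Raising this to the power $\tfrac{2\alpha}{2\alpha+1}$ turns the general term of the series in \prettyref{thm:hgtohsqrtg} into a constant multiple of $k^{-\theta\frac{2\alpha}{2\alpha+1}}$, and the resulting $p$-series converges precisely when $\theta\,\frac{2\alpha}{2\alpha+1}>1$, that is, when $\theta>\frac{1+2\alpha}{2\alpha}$, which is exactly the stated hypothesis. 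Applying \prettyref{thm:hgtohsqrtg} then yields $\HH{h}(E)>0$.

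There is essentially no genuine obstacle: the corollary is a routine instantiation entirely parallel to \prettyref{cor:coroh2g/x}. The only point deserving care is the bookkeeping of the exponent $\frac{2\alpha}{2\alpha+1}$ that is inherited from the inequality $\h^{-1}(x)\gtrsim x^{1/\alpha}$ used inside the proof of the theorem; it is precisely this exponent that converts the universal logarithmic threshold $\theta>2$ of the Kakeya-type estimate into the $\alpha$-dependent threshold $\theta>\frac{1+2\alpha}{2\alpha}$ appearing here. One should confirm that the inequality is strict, so that the borderline value $\theta=\frac{1+2\alpha}{2\alpha}$ (where the $p$-series diverges) is correctly excluded.
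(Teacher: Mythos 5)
Your proposal is correct and is exactly the argument the paper intends: the corollary is the direct specialization of \prettyref{thm:hgtohsqrtg} to $\h(x)=x^\alpha$, $\g(x)=x^\beta$, with the cancellation of the exponent $\alpha+\beta/2$ reducing the series to $\sum_k k^{-\theta\frac{2\alpha}{2\alpha+1}}$, convergent precisely when $\theta>\frac{1+2\alpha}{2\alpha}$ (the paper leaves this verification implicit, in parallel with its proof of \prettyref{cor:coroh2g/x}). The only cosmetic point is that since the hypothesis is the inequality $h(x)\ge C x^{\alpha+\beta/2}\log^\theta(\frac{1}{x})$ rather than equality, one should note that the ratio $\h\sqrt{\g}/h$ is then bounded \emph{above} by $1/(C\log^\theta(\frac{1}{x}))$, which only helps both the ordering $h\prec\h\sqrt{\g}$ and the summability.
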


\begin{remark}
Note that again the bound $\dim(E)\ge \alpha+\frac{\beta}{2}$ for $E\in F_{\alpha\beta}$ follows directly from this last corollary.
\end{remark}

In the next theorem we consider the case of a family of very small Furstenberg sets. More precisely, we deal with a family that corresponds to the case $\alpha=0$, $\beta\in (0,1]$ in the classical setting.

\begin{theorem}[$\h\g\to\h\sqrt{\g}$, $\h$ zero dimensional, $\g$ positive]\label{thm:hgtohsqrtg,hzero,gpos}
	Let $\beta>0$ and define $\g(x)=x^\beta, \h(x)=\frac{1}{\log(\frac{1}{x})}$.  If $E$ is an $F_{\h\g}$-set, then $\dim(E)\ge \frac{\beta}{2}$.
\end{theorem}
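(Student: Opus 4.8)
The plan is to show that for every $s<\frac{\beta}{2}$ one has $\HH{s}(E)>0$, which yields $\dim(E)\ge s$ and, letting $s\uparrow\frac{\beta}{2}$, the desired bound. Since $\HH{s}(E)>0$ is equivalent to $\HH{s}_\infty(E)>0$, it suffices to fix a covering $\{B_j\}=\{B(x_j;r_j)\}$ of $E$ with $r_j<\delta_E$ and prove $\sum_j r_j^s\gtrsim 1$. Throughout I write $h=h_s$, so that the gap is $\Delta(x)=\frac{\h\sqrt{\g}}{h}(x)$ with $\Delta(2^{-k})=\frac{2^{-k(\beta/2-s)}}{k\log 2}\to 0$; in particular $h\prec\h\sqrt{\g}$, so we are in the regime of \prettyref{sec:hsqrtg}. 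Note that \prettyref{thm:hgtohsqrtg} cannot be quoted directly, because its hypothesis $\h\succ h_\alpha$ fails for the zero–dimensional $\h$ at hand.

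The argument follows the combinatorial scheme of \prettyref{thm:hgtohsqrtg}, the key point being that the zero–dimensionality of $\h$ makes the separation produced by the splitting lemma \emph{independent of the scale}. Indeed, the balls of $\{B_j\}$ meeting a fixed $\ell_e$ cover $\ell_e\cap E$, so $\sum_{j:\,B_j\cap\ell_e\neq\emptyset}\h(r_j)\ge\hh{\h}(\ell_e\cap E)>1$. Applying \prettyref{lem:split} with a mass $\eta\sim 1$ (instead of a vanishing $\eta$, as in the positive–dimensional case) produces two sub-segments $\ell_e^{\pm}$, each of $\h$-mass $\gtrsim 1$, that are $s_0$-separated with $s_0:=\h^{-1}(c)=e^{-1/c}$ a \emph{fixed positive constant}. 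This is exactly what the positive–dimensional argument lacks: there the separation $\h^{-1}(r a_k)$ shrinks with the per-scale mass, whereas here it does not depend on $k$.

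With $s_0$ fixed I then decompose the directions by the dyadic scale carrying the mass. Choosing the summable weight $\a=\{2^{-k\tau}\}_k$ with $0<\tau<\frac{\beta}{2}-s$ and applying the pigeonholing of \prettyref{lem:part} to each end, I obtain $L=\bigcup_k L_k$ such that for $e\in L_k$ both ends $\ell_e^{\pm}$ meet $\gtrsim a_k/\h(2^{-k})\sim a_k k$ balls at scale $2^{-k}$. For a maximal $2^{-k}$-separated subset of $L_k$, \prettyref{lem:entropy} gives $N_k\gtrsim\HH{\g}_\infty(L_k)/\g(2^{-k})=\HH{\g}_\infty(L_k)\,2^{k\beta}$. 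Counting the triples
\[
\mathcal{T}_k=\{(j_-,j_+,i):\ I^{-}_{e_i}\cap E_k\cap B_{j_-}\neq\emptyset,\ I^{+}_{e_i}\cap E_k\cap B_{j_+}\neq\emptyset\}
\]
in two ways — from below by $\#\mathcal{T}_k\gtrsim N_k(a_k k)^2$, and from above by \prettyref{lem:conobolas}, whose hypothesis $\frac{1}{5}s_0>2^{-k+1}$ now holds for all large $k$ because $s_0$ is constant, by $\#\mathcal{T}_k\lesssim\frac{1}{s_0}(\#J_k)^2$ — yields $\#J_k\gtrsim s_0^{1/2}N_k^{1/2}a_k k\gtrsim\HH{\g}_\infty(L_k)^{1/2}2^{k\beta/2}a_k k$. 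Summing,
\[
\sum_j r_j^s\ \ge\ \sum_k \#J_k\,2^{-ks}\ \gtrsim\ \sum_k \HH{\g}_\infty(L_k)^{1/2}\,2^{k(\beta/2-s)}a_k k\ \gtrsim\ \sum_k\HH{\g}_\infty(L_k)\ \ge\ \HH{\g}_\infty(L)>0,
\]
where I used that $s<\frac{\beta}{2}$ makes $2^{k(\beta/2-s)}a_k k=2^{k(\beta/2-s-\tau)}k\to\infty$ (so each coefficient is $\gtrsim 1$ for large $k$) together with the $\sigma$-subadditivity of $\HH{\g}_\infty$.

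The step I expect to be the main obstacle is guaranteeing that, for a set of directions of positive $\HH{\g}$-content, the \emph{two} separated ends $\ell_e^{\pm}$ carry comparable mass at a \emph{common} scale $2^{-k}$, so that \prettyref{lem:conobolas} may be applied to balls of one scale. The constant-separation split uses the total, scale-independent mass, which does not by itself pin the two ends to the same dyadic scale; reconciling this with the per-scale decomposition of \prettyref{lem:part} is the delicate point. I would resolve it by a further pigeonholing over the pair of end-scales, treating separately the regime in which one end's mass sits at coarser scales — there the relevant balls are few and large, so they contribute to $\sum_j r_j^s$ directly and the estimate is easier. It is precisely the zero-dimensionality of $\h$ (constant separation, and entropy $\gtrsim\log(1/\delta)$ on every line at every scale) that makes this coordination possible and forces the clean exponent $\frac{\beta}{2}$.
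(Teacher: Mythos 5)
Your argument breaks at its central step, and it is exactly the step where the real difficulty of this theorem lives. You claim that, since $\h$ is zero dimensional, applying \prettyref{lem:split} to $\ell_e\cap E$ with mass $\eta\sim 1$ yields two ends of mass $\gtrsim 1$ separated by a fixed constant $s_0=\h^{-1}(\eta/8)$, ``independent of the scale''. But \prettyref{lem:split} carries the hypothesis $\h^{-1}(\frac{\eta}{8})<\delta$, where $\delta$ is the scale of the premeasure $\hh{\h}$; with $\eta\sim 1$ this forces $\delta\gtrsim e^{-8}$, while your covering $\{B_j\}$ lives at arbitrarily small scales $\delta$. That hypothesis is not a removable technicality: it is what allows the middle interval $J$, of length $\h^{-1}(\eta/8)$, to be discarded by covering it with the single admissible set $J$ itself, and without it the conclusion genuinely fails. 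Indeed, take $E=[0,\lambda]^2$ with $\lambda\ll e^{-8}$ and $L$ an arc of directions: any interval $I$ of length $\lambda$ satisfies $\hh{\h}(I)\gtrsim \lambda/(\delta\log(1/\delta))\to\infty$ as $\delta\to 0$, so $E$ is a legitimate $F_{\h\g}$-set (with $\delta_E$ small), yet every $\ell_e\cap E$ has diameter at most $\sqrt{2}\,\lambda\ll s_0$ and admits no pair of $s_0$-separated subsets of positive mass. Hence no scale-independent splitting exists, and everything downstream collapses: the application of \prettyref{lem:conobolas} with the constant factor $1/s_0$, the dyadic count, and the choice $\a=\{2^{-k\tau}\}$. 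Your acknowledged ``delicate point'' (forcing the two ends to carry mass at a common dyadic scale) is secondary to this, and your proposed fix for it was in any case only a sketch.

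In fact zero dimensionality works against you, not for you: once the mass is localized to a single scale, as it must be before \prettyref{lem:conobolas} can be applied, the splitting parameter is the per-scale mass $\eta_k\to 0$, and $\h^{-1}(y)=e^{-1/y}$ turns even the slowly decaying choice $\eta_k\sim k^{-2}$ into separations $s_k\sim e^{-ck^2}$, far below $2^{-k}$; thus the hypothesis $\frac{1}{5}s_k>b_{k-1}$ of \prettyref{lem:conobolas} fails outright for dyadic scales. This is precisely why the paper's proof (i) pigeonholes first, with weights $a_k\sim k^{-2}$ via \prettyref{lem:part}, and splits $\ell_e\cap E_k$ afterwards, which automatically places both ends at the same scale at the price of a shrinking separation $\h^{-1}(ck^{-2})$, and (ii) abandons dyadic scales for the hyperdyadic sequence $b_k=2^{-(1+\e)^k}$, which simultaneously restores the hypothesis of \prettyref{lem:conobolas} (since $b_{k-1}\ll e^{-ck^2}$ for large $k$) and produces the doubly exponential gain $b_k^{1+2s-\beta}/b_{k-1}=2^{(1+\e)^k\left(\frac{1}{1+\e}-(1+2s-\beta)\right)}$, which is what beats the loss $k^4e^{ck^2}$ when $s<\frac{\beta}{2}$ and $\e$ is chosen small. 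Your proposal contains no mechanism capable of absorbing such a superexponential loss; repairing the gap essentially forces the paper's hyperdyadic construction.
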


\begin{proof}
Once again, we follow \cite{mr10a}, Theorem $5.1$. Let $E\in F_{\h\g}$ and let $\{B_j\}_{j\in\N}$ be a covering of $E$ by balls with $B_j=B(x_j;r_j)$. Now we consider a scaling sequence $\b$ to be determined later and, by using \prettyref{lem:part}, we obtain a decomposition $L=\bigcup_{k\ge k_0}L_k$ with
\begin{equation*}
	L_k=\left\{e\in L: \hh{\h}(\ell_e\cap E_k)\ge ck^{-2}\right\},
\end{equation*}
where $E_k=E\cap \bigcup_{J_k^\b}B_j$, $J_k^\b$ is the partition of the radii as in \prettyref{eq:jkb} associated to $\b$ and $c>0$ is a suitable constant.  
We apply \prettyref{lem:split} and also define, as in \prettyref{thm:hgtohsqrtg}, $\Pi_k:=J^\b_{k}\times
J^\b_{k}\times\{1,..,N_k\}$ and 
\begin{equation}
\mathcal{T}^\b_k:=\left\{(j_{-},j_{+},i)\in \Pi_k:
    	I_{e_{i}}^-\cap E_{k}\cap B_{j_-}\neq\emptyset\ 
	I_{e_{i}}^+\cap E_{k}\cap B_{j_+}\neq\emptyset
 \right\},\nonumber
\end{equation}
where $\{e^k_j\}_{j=1}^{N_k}$ is a $b_k$-separated subset of $L_k$. 
By \prettyref{lem:conobolas}, we obtain
\begin{equation}\label{eq:otraescala}
\#\mathcal{T}^\b_k\lesssim\frac{b_{k-1}}{b_k}\frac{1}{\h^{-1}(ck^{-2})}(\#J^\b_{k})^{2}.
\end{equation} 
For the lower bound on $\#\mathcal{T}^\b_k$, we have the extra information about the entropy of $L_k$, i.e., $N_k\gtrsim \HH{\beta}_\infty(L_k)/b_k^\beta$. We therefore obtain the analogous of \prettyref{eq:ifijo}:
\begin{equation*}
\#\mathcal{T}_k\gtrsim \frac{\HH{\beta}_\infty(L_k)}{b_k^\beta}\left(\frac{k^{-2}}{\h(b_{k-1})}\right)^2. 
\end{equation*} 
The last two inequalities together imply that 
\begin{equation}
\#J^\b_{k} \gtrsim \HH{\beta}_\infty(L_k)^{\frac{1}{2}}\left(\frac{b_k^{1-\beta}}{b_{k-1}}\right)^{1/2}\frac{e^{-{c}k^2}}{k^2}.\nonumber
\end{equation}
It follows then that, for $s<\frac{\beta}{2}$,
\begin{eqnarray*}\label{eq:sumafinal}
\sum_j r_j^{s}& \ge &\sum_k b_k^{s}\#J_k\\
& = & \sum_k \HH{\beta}_\infty(L_k)^\frac{1}{2} \frac{b_k^{\frac{1}{2}+s-\frac{\beta}{2}}}{b^\frac{1}{2}_{k-1}} \frac{1}{k^2e^{ck^2}}\\
	&\gtrsim & \sqrt{\sum_k \HH{\beta}_\infty(L_k) \frac{b_k^{1+2s-\beta}}{b_{k-1}} \frac{1}{k^4e^{ck^2}}}.
\end{eqnarray*}
Consider the hyperdyadic scale $b_k=2^{-(1+\e)^k}$ with some $\e>0$ to be determined. With this choice, we have
\begin{equation}
	\frac{b_k^{1+2s-\beta}}{b_{k-1}}=2^{(1+\e)^{k-1}-(1+\e)^{k}(1+2s-\beta)}=2^{(1+\e)^{k}\left(\frac{1}{1+\e}-(1+2s-\beta)\right)}.\nonumber
\end{equation} 
Since $1+2s-\beta<1$, we can choose $\e>0$ such that 
$\frac{1}{1+\e}-(1+2s-\beta)>0$.
More precisely, take $\e$ such that 
$0<\e<\frac{\beta-2s}{1+2s-\beta}.$

Therefore, 
\begin{eqnarray*}
\left(\sum_j r_j^{s}\right)^2 & \gtrsim & 
\sum_k \HH{\beta}_\infty(L_k) \frac{b_k^{1+2s-\beta}}{b_{k-1}}\frac{1}{k^4e^{ck^2}}\\
& = &\sum_k \HH{\beta}_\infty(L_k) \frac{2^{(1+\e)^{k}\left(\frac{1}{1+\e}-(1+2s-\beta)\right)}}{k^4e^{ck^2}}\\
&\gtrsim& \sum_k \HH{\beta}_\infty(L_k) \gtrsim 1.
\end{eqnarray*}

\end{proof}

We have the following immediate corollary.

\begin{corollary}
	Let $\theta>0$. If $E$ is an  $F_{\h\g}$-set with $\h(x)=\frac{1}{\log^\theta(\frac{1}{x})}$ and $\g(x)=x^\beta$, then $\dim(E)\ge \frac{\beta}{2}$.
\end{corollary}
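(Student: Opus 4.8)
The plan is to re-run the argument of \prettyref{thm:hgtohsqrtg,hzero,gpos} essentially verbatim, observing that the only places where the precise form of $\h$ enters are through its inverse $\h^{-1}$ (in the splitting \prettyref{lem:split} and in \prettyref{eq:otraescala}) and through the factor $\h(b_{k-1})$ in the lower bound \prettyref{eq:ifijo} for $\#\mathcal{T}^\b_k$. For $\h(x)=\log^{-\theta}(1/x)$ one computes directly $\h^{-1}(y)=e^{-y^{-1/\theta}}$, whence $\h^{-1}(ck^{-2})=e^{-c^{-1/\theta}k^{2/\theta}}$ and $\h(b_{k-1})^{-1}=\log^\theta(1/b_{k-1})$. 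Since $\h\in\H_0$ for every $\theta>0$, the entire set-up (the Frostman measure on $L$, the decomposition $L=\bigcup_k L_k$ from \prettyref{lem:part}, the application of \prettyref{lem:conobolas} and the double counting of $\mathcal{T}^\b_k$) is untouched.

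Carrying the two bounds on $\#\mathcal{T}^\b_k$ through as in the theorem produces the analogue of the lower bound for $\#J^\b_k$, namely
\begin{equation*}
\#J^\b_{k} \gtrsim \HH{\beta}_\infty(L_k)^{\frac{1}{2}}\left(\frac{b_k^{1-\beta}}{b_{k-1}}\right)^{1/2}\frac{\log^\theta(1/b_{k-1})}{k^2}\,e^{-\frac{1}{2}c^{-1/\theta}k^{2/\theta}},
\end{equation*}
where the only difference from the case $\theta=1$ is that the power $k^2$ in the decaying exponential is replaced by $k^{2/\theta}$ (together with an extra factor $\log^\theta(1/b_{k-1})$ in the numerator, which only helps). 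Then, exactly as before, I would fix $s<\frac\beta2$, choose the hyperdyadic scale $b_k=2^{-(1+\e)^k}$ with $0<\e<\frac{\beta-2s}{1+2s-\beta}$ so that $c_0:=\frac{1}{1+\e}-(1+2s-\beta)>0$, and arrive at
\begin{equation*}
\Big(\sum_j r_j^{s}\Big)^2 \gtrsim \sum_k \HH{\beta}_\infty(L_k)\,\frac{2^{c_0(1+\e)^k}}{k^{4}}\,e^{-c^{-1/\theta}k^{2/\theta}}.
\end{equation*}

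The one point that requires genuine checking — the only content beyond the theorem — is that the doubly–exponential growth $2^{c_0(1+\e)^k}$ still dominates the new decaying factor $e^{-c^{-1/\theta}k^{2/\theta}}$ for every fixed $\theta>0$. This is immediate: for fixed $\theta$ the exponent $k^{2/\theta}$ is a fixed power of $k$, hence is eventually dwarfed by $(1+\e)^k$, so every tail summand is bounded below (in fact tends to $+\infty$), giving $\sum_j r_j^s\gtrsim\HH{\beta}_\infty(L)>0$. Crucially, the scale and $\e$ depend only on $s$ and $\beta$, not on $\theta$; the value of $\theta$ affects only how large $k$ must be before domination sets in, which is irrelevant for a lower bound on an infinite sum. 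Since this holds for every $s<\frac\beta2$, we conclude $\dim(E)\ge\frac\beta2$. (For $\theta\ge1$ one could shortcut via monotonicity, since $\h\le\log^{-1}(1/x)$ near $0$ forces $F_{\h\g}\subseteq F_{\frac{1}{\log(1/x)}\,\g}$ and reduces matters to the theorem; but this shortcut fails for $0<\theta<1$, where re-running the proof as above is the clean route.)
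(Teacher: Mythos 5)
Your proposal follows exactly the route the paper intends: the corollary is stated there without proof as ``immediate'' from \prettyref{thm:hgtohsqrtg,hzero,gpos}, and your re-run of that argument with $\h^{-1}(y)=e^{-y^{-1/\theta}}$, hence $e^{-c^{-1/\theta}k^{2/\theta}}$ replacing $e^{-ck^{2}}$, is precisely the intended adaptation; the computations, the hyperdyadic choice of $b_k$ with $\e$ depending only on $s$ and $\beta$, and the monotonicity shortcut for $\theta\ge 1$ (and its failure for $\theta<1$) are all correct. One caveat: your closing justification --- that $\theta$ only shifts the index past which domination sets in, ``which is irrelevant for a lower bound on an infinite sum'' --- is not right as literally stated, because the final step rests on $\sigma$-subadditivity applied to the \emph{whole} union $\bigcup_k L_k\supseteq L$; if you discard the indices $k<k_1(\theta)$, the tail $\sum_{k\ge k_1}\HH{\beta}_\infty(L_k)$ could in principle be zero (the content of $L$ may sit on the discarded $L_k$). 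The repair is one line: the factor $2^{c_0(1+\e)^k}k^{-4}e^{-c^{-1/\theta}k^{2/\theta}}$ is strictly positive for every $k$ and tends to infinity, hence has a positive infimum $c_1(\theta,\e,c)>0$ over all $k\ge1$, so the full sum is bounded below by $c_1\sum_k\HH{\beta}_\infty(L_k)\ge c_1\HH{\beta}_\infty(L)>0$, with a constant depending on $\theta$ but not on the covering, which is all that is needed.
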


The next question would be: Which should it be the expected dimension function for an $F_{\h\g}$-set if 
$\h(x)=\g(x)=\frac{1}{\log(\frac{1}{x})}$? The preceding results lead us to the following conjecture
\begin{conjecture}
Let  $\h(x)=\g(x)=\frac{1}{\log(\frac{1}{x})}$ and let $E$ be an $F_{\h\g}$-set. Then  $\frac{1}{\log^\frac{3}{2}(\frac{1}{x})}$ should be an appropriate dimension function for $E$, in the sense that a logarithmic gap can be estimated.
\end{conjecture}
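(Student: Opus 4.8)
The plan is to run the combinatorial machinery of \prettyref{thm:hgtohsqrtg,hzero,gpos} in the genuinely doubly zero-dimensional regime, where both the fibers and the direction set carry only logarithmic mass. First I would record that the Kakeya bound of \prettyref{sec:h2g/id} is vacuous here: with $\h=\g=\frac{1}{\log(\frac1x)}$ one has $\frac{\h^2\g}{\i}(x)\sim\frac{1}{x\log^3(\frac1x)}\to\infty$ as $x\to0^+$, so $\frac{\h^2\g}{\i}\notin\H$ and only the bound $\h\sqrt{\g}$ carries information. Since $\h\sqrt{\g}(x)=\frac{1}{\log^{3/2}(\frac1x)}$, the target is exactly the conjectured function, and the natural statement to aim for is $\HH{h}(E)>0$ for every test function $h\prec\h\sqrt{\g}$ separated from it by an iterated logarithmic gap, for instance $h(x)=\frac{(\log\log(\frac1x))^\theta}{\log^{3/2}(\frac1x)}$, so that $\frac{\h\sqrt{\g}}{h}\to0$.

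Given a covering $\{B_j\}$ of $E$ by balls $B(x_j;r_j)$, I would fix a scaling sequence $\b=\{b_k\}$, take the summable weights $a_k=c\,k^{-2}$, and apply \prettyref{lem:part} to obtain $L=\bigcup_k L_k$ with $\hh{\h}(\ell_e\cap E_k)\ge a_k$ on $L_k$. Then \prettyref{lem:split} produces, for each $e\in L_k$, two $s_k$-separated subintervals $I_e^\pm$ with $\hh{\h}(I_e^\pm\cap E_k)\gtrsim a_k$, where $s_k=\h^{-1}(ra_k)=e^{-1/(ra_k)}\sim e^{-C'k^2}$. Selecting a $b_k$-separated subset of $L_k$ and invoking \prettyref{lem:entropy} with the zero-dimensional $\g$ gives the entropy bound $N_k\gtrsim\HH{\g}_\infty(L_k)/\g(b_k)=\HH{\g}_\infty(L_k)\log(\tfrac{1}{b_k})$. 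Double counting the incidence set $\mathcal{T}_k^\b$ via \prettyref{lem:conobolas} (upper bound $\#\mathcal{T}_k^\b\lesssim\frac{b_{k-1}}{b_k}\frac{1}{s_k}(\#J_k^\b)^2$) against the fiber mass (lower bound $\#\mathcal{T}_k^\b\gtrsim N_k(a_k/\h(b_{k-1}))^2$) yields, after taking square roots, a lower bound for $\#J_k^\b$ and hence for $\sum_j h(r_j)\ge\sum_k h(b_k)\#J_k^\b$.

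The encouraging part is that the leading logarithmic factors cancel exactly: the entropy factor $\log(\frac{1}{b_k})$ from $\g$, the factor $1/\h(b_{k-1})=\log(\frac{1}{b_{k-1}})$ from the fiber mass, and the weight $a_k=ck^{-2}$ combine with $h(b_k)\sim\log^{-3/2}(\frac{1}{b_k})$ to leave a residual of the shape $\sum_k\HH{\g}_\infty(L_k)^{1/2}\,P(k)\,(b_k/b_{k-1})^{1/2}s_k^{1/2}$ with $P$ a mild polynomial factor (times the iterated-log gap). Since \prettyref{lem:conobolas} only requires $\frac15 s_k>b_{k-1}$, the scale $\b$ can be taken barely fine enough, say $\log(\frac{1}{b_k})\sim k^2$, so that $b_k/b_{k-1}$ decays only exponentially in $k$. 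After this tuning the order $\frac{1}{\log^{3/2}(\frac1x)}$ is confirmed to be the correct one, which is precisely why the Conjecture is stated up to a logarithmic gap.

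The hard part — and the reason this remains conjectural rather than a theorem — is the irreducible factor $s_k^{1/2}=e^{-C'k^2/2}$. In the power case $\g=x^\beta$ of \prettyref{thm:hgtohsqrtg,hzero,gpos} the surviving quantity was a genuine power $\frac{b_k^{1+2s-\beta}}{b_{k-1}}$, which the hyperdyadic choice $b_k=2^{-(1+\e)^k}$ turns into doubly exponential growth in $k$ (as $2^{(1+\e)^k(\frac{1}{1+\e}-(1+2s-\beta))}$ with a positive exponent when $s<\frac{\beta}{2}$), and this overwhelms the super-exponential separation penalty $s_k^{1/2}$. When $\g$ is only logarithmic the cancellation leaves no power of $b_k$ behind, merely the decaying $(b_k/b_{k-1})^{1/2}s_k^{1/2}$, so there is no reserve to defeat $s_k^{1/2}$. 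Worse, the summability constraint on $\a$ forces $a_k\to0$, and through $s_k=\h^{-1}(a_k)=e^{-1/a_k}$ this makes the separation super-exponentially small for every admissible weight (even the borderline $a_k\sim(k\log^2 k)^{-1}$ gives $s_k\sim e^{-k\log^2 k}$); consequently no choice of $\b$, and no iterated-logarithmic gap in $h$ (which contributes only sub-exponentially), can absorb it. I expect this separation penalty to be the decisive obstacle, and overcoming it should require a genuinely new ingredient: a refined version of \prettyref{lem:split} producing pieces with only polynomially small separation, a multi-scale bush argument that avoids paying the full factor $1/s_k$ in \prettyref{lem:conobolas}, or a potential-theoretic estimate replacing the incidence count altogether.
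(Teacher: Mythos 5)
The first thing to say is that this statement is not a theorem of the paper at all: it is an open conjecture, and immediately after stating it the authors write ``We do not know, however, how to prove this.'' So there is no paper proof to compare your attempt against, and the honest assessment is whether your analysis is sound and consistent with the paper's reasons for leaving the statement open. On that score your proposal is essentially a correct reconstruction of those reasons, using the paper's own machinery. You rightly note that the Kakeya-type bound of \prettyref{sec:h2g/id} is vacuous here ($\h^2\g/\i$ blows up at the origin, so it is not a dimension function) and that the conjectured function $\frac{1}{\log^{3/2}(1/x)}$ is exactly $\h\sqrt{\g}$, the natural target of the combinatorial bound of \prettyref{sec:hsqrtg}. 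Your run of the argument is accurate: the summable weights $a_k\sim k^{-2}$ force, through \prettyref{lem:split}, a separation $s_k=\h^{-1}(ra_k)\sim e^{-Ck^2}$; the leading logarithmic factors do cancel (the entropy factor $\log(1/b_k)^{1/2}$ from \prettyref{lem:entropy}, the fiber factor $\log(1/b_{k-1})$, and $h(b_k)\sim\log^{-3/2}(1/b_k)$, provided $\log(1/b_{k-1})\sim\log(1/b_k)$); and what survives is the irreducible penalty $s_k^{1/2}(b_k/b_{k-1})^{1/2}$. Your comparison with \prettyref{thm:hgtohsqrtg,hzero,gpos} is also the right one: there the power-type entropy reserve $b_k^{-\beta}$, under the hyperdyadic choice $b_k=2^{-(1+\e)^k}$, produces doubly exponential growth that defeats $e^{ck^2}$, whereas a logarithmic $\g$ leaves no such reserve. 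The remark that summability of $\a$ makes the penalty unavoidable for \emph{every} admissible weight (even the borderline $a_k\sim(k\log^2k)^{-1}$ gives $s_k\sim e^{-k\log^2k}$) is a genuine point that the paper does not spell out, and it explains precisely why the authors stop at a conjecture.

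One caution on presentation: your proposal is a diagnosis of the obstruction, not a proof, and you say so --- but then the sentence claiming that ``the order $\frac{1}{\log^{3/2}(1/x)}$ is confirmed to be the correct one'' overstates what the computation shows. What is confirmed is only that $\h\sqrt{\g}$ is the barrier of this method, i.e.\ the function at which the double-counting argument would close if the separation penalty could be absorbed; it is not evidence that $\HH{h}(E)>0$ actually holds for test functions near $\h\sqrt{\g}$, nor does it address sharpness (which would require a construction). With that qualification, your account of where a new idea is needed --- a splitting lemma with polynomially small separation, an incidence argument that does not pay the full $1/s_k$, or a potential-theoretic substitute --- is a reasonable reading of the state of the problem, and it leaves the conjecture exactly where the paper leaves it.
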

We do not know, however,  how to prove this.

\section{A remark on the notion of size for the set of directions}\label{sec:size}

In \prettyref{sec:prelim} we have emphasized that the relevant ingredient for the combinatorial proof in \prettyref{sec:hsqrtg} is the notion of $\delta$-entropy of a set. In addition, we have discussed the possibility of consider the Box dimension as an adequate notion of size to detect this quantity. In this section we present an example that shows that the notion of Packing dimension is also inappropriate. We want to remark here that none of them will give any further (useful) information to this problem and therefore there is no chance to obtain similar results in terms of those notions of dimensions. To make it clear, consider the classical problem of proving the bound $\dim(E)\ge\alpha+\frac{\beta}{2}$ for any $E\in F_{\alpha\beta}$ where $\beta$ is the Box or Packing dimension of the set $L$ of directions. 

We illustrate this remark with the extreme case of $\beta=1$. It is absolutely trivial that nothing meaningful can be said if we only know that the Box dimension of $L$ is $1$, since any countable dense subset $L$ of $\s$ satisfies $\dim_B(L)=1$ but in that case, since $L$ is countable, we can only obtain that $\dim(E)\ge\alpha$. 

For the Packing dimension, it is also easy to see that knowing that $\dim_P(L)=1$ does not add any further information about the Hausdorff dimension of the set $E$. To see why, consider the following example. Let $C_\alpha$ be a regular Cantor set such that $\dim(C_\alpha)=\dim_B(C_\alpha)=\alpha$. Let $L$ be a set of directions with $\dim_H(L)=0$ and $\dim_P(L)=1$. We build the Furstenberg set $E$ in polar coordinates as
\begin{equation}
 E:=\{(r,\theta): r\in C_\alpha, \theta\in L\}.
\end{equation} 
This can be seen as a ``Cantor target'', but with a fractal set of directions instead of the whole circle. By the Hausdorff dimension estimates, we know that $\dim(E)\ge\alpha$. We show that in this case we also have that $\dim(E)\le \alpha$, which implies that in the general case this is the best that one could expect, even with the additional information about the Packing dimension of $L$. For the upper bound, consider the function $f:\RR\to\RR$ defined by $f(x,y)=(x\cos y,x\sin y)$. Clearly $E=f(C_\alpha\times L)$, and therefore
\begin{equation*}
 \dim(E)=\dim(f(C_\alpha\times L)) \le \dim (C_\alpha\times L)= \dim_B(C_\alpha)+\dim(L)=\alpha
\end{equation*} 
by the known product formulae that can be found, for example, in \cite{fal03}.

\section{Acknowledgements}
We would like to thank to Pablo Shmerkin for many interesting suggestions an remarks about this problem, specifically the example of the ``Cantor target'' in the last section.


\begin{thebibliography}{Wol03}

\bibitem[Del02]{del02}
Delaware, R.
\newblock Sets whose {H}ausdorff measure equals method {I} outer measure.
\newblock \emph{Real Anal. Exchange}, \textbf{27}(2):535--562, 2001/02.

\bibitem[Del03]{del03}
Delaware, R.
\newblock Every set of finite {H}ausdorff measure is a countable union of sets
  whose {H}ausdorff measure and content coincide.
\newblock \emph{Proc. Amer. Math. Soc.}, \textbf{131}(8):2537--2542
  (electronic), 2003.

\bibitem[Fal03]{fal03}
Falconer, K.
\newblock \emph{Fractal geometry}.
\newblock John Wiley \& Sons Inc., Hoboken, NJ, second edition, 2003.
\newblock Mathematical foundations and applications.

\bibitem[Hau18]{Hau18}
Hausdorff, F.
\newblock Dimension und \"au\ss eres {M}a\ss.
\newblock \emph{Math. Ann.}, \textbf{79}(1-2):157--179, 1918.

\bibitem[KT01]{KT01}
Katz, N. and Tao, T.
\newblock Some connections between {F}alconer's distance set conjecture and
  sets of {F}urstenburg type.
\newblock \emph{New York J. Math.}, \textbf{7}:149--187 (electronic), 2001.

\bibitem[Mat95]{mat95}
Mattila, P.
\newblock \emph{Geometry of sets and measures in {E}uclidean spaces}, volume~44
  of \emph{Cambridge Studies in Advanced Mathematics}.
\newblock Cambridge University Press, Cambridge, 1995.
\newblock Fractals and rectifiability.

\bibitem[Mit02]{mit02}
Mitsis, T.
\newblock Norm estimates for the {K}akeya maximal function with respect to
  general measures.
\newblock \emph{Real Anal. Exchange}, \textbf{27}(2):563--572, 2001/02.

\bibitem[MR]{mr10b}
Molter, U. and Rela, E.
\newblock Sharp dimension bounds for {F}urs\-ten\-berg-type sets.
\newblock \emph{Preprint}.
\newblock \url{http://arxiv.org/abs/1006.4862}.

\bibitem[MR10]{mr10a}
Molter, U. and Rela, E.
\newblock Improving dimension estimates for {F}ur\-sten\-berg-type sets.
\newblock \emph{Adv. Math.}, \textbf{223}(2):672--688, 2010.

\bibitem[Rog70]{rog70}
Rogers, C.~A.
\newblock \emph{Hausdorff measures}.
\newblock Cambridge University Press, London, 1970.

\bibitem[Tao]{taofinite}
Tao, T.
\newblock Finite field analogues of the {E}rd\"os, {F}alconer, and
  {F}urstenburg problems.
\newblock \url{http://ftp.math.ucla.edu/~tao/preprints/Expository/finite.dvi}.

\bibitem[Wol99]{wol99a}
Wolff, T.
\newblock Decay of circular means of {F}ourier transforms of measures.
\newblock \emph{Internat. Math. Res. Notices}, (10):547--567, 1999.

\bibitem[Wol03]{wol03}
Wolff, T.~H.
\newblock \emph{Lectures on harmonic analysis}, volume~29 of \emph{University
  Lecture Series}.
\newblock American Mathematical Society, Providence, RI, 2003.
\newblock With a foreword by Charles Fefferman and preface by Izabella \L aba,
  Edited by \L aba and Carol Shubin.

\end{thebibliography}
\end{document}